\documentclass[12pt, reqno]{amsart}


\usepackage[utf8]{inputenc}
\usepackage[T2A]{fontenc}
\usepackage[english]{babel}


\usepackage{hyperref}
\usepackage{enumitem}
\usepackage[margin=2.5cm]{geometry}

\usepackage{amssymb}
\usepackage{amscd}
\usepackage{graphicx}
\usepackage{xcolor}
\usepackage[all]{xy}

\hypersetup{
    hypertexnames=false,
    colorlinks,
    linkcolor={red!50!black},
    citecolor={blue!50!black},
    urlcolor={blue!80!black}
}

\newtheorem{theorem}[subsection]{Theorem}
\newtheorem{lemma}[subsection]{Lemma}
\newtheorem{sublemma}[subsubsection]{Lemma}

\newtheorem{corollary}[subsection]{Corollary}

\newtheorem{definition}[subsection]{Definition}

\newtheorem{remark}[subsection]{Remark}

\makeatletter
\@addtoreset{subsection}{section}
\@addtoreset{equation}{section}
\@addtoreset{figure}{section}
\@addtoreset{table}{section}
\makeatother


\makeatletter
\newcommand\testshape{family=\f@family; series=\f@series; shape=\f@shape.}
\def\myemphInternal#1{\if n\f@shape%
\begingroup\itshape #1\endgroup\/%
\else\begingroup\bfseries #1\endgroup%
\fi}
\def\myemph{\futurelet\testchar\MaybeOptArgmyemph}
\def\MaybeOptArgmyemph{\ifx[\testchar \let\next\OptArgmyemph
                 \else \let\next\NoOptArgmyemph \fi \next}
\def\OptArgmyemph[#1]#2{\index{#1}\myemphInternal{#2}}
\def\NoOptArgmyemph#1{\myemphInternal{#1}}
\makeatother

\newcommand{\bR}{\mathbb{R}}

\newcommand{\Int}{\mathop{\mathrm{Int}{}}\nolimits}
\newcommand{\Cl}[1]{\overline{#1}}

\newcommand\Ysp{Y}
\newcommand\Usp{U}
\newcommand\Vsp{V}

\newcommand\Sat[1]{S(#1)}

\newcommand\dif{h}
\newcommand{\Partition}{\Delta}

\newcommand{\leaf}{\omega}

\newcommand{\strip}{S}
\newcommand{\stripSurf}{Z}
\newcommand{\preStripSurf}{\stripSurf_0}
\newcommand{\bdX}{X}
\newcommand{\bdY}{Y}

\newcommand{\stInd}{{\lambda}}
\newcommand{\StInd}{\Lambda}

\newcommand{\bdGlueInd}{{\gamma}}
\newcommand{\BdGlueInd}{\Gamma}

 \newcommand{\qmap}{q}

\newcommand{\pr}{p} 

\renewcommand{\emptyset}{\varnothing}

\newcommand\singLeaves{\mathrm{Sing}(\Partition)}
\newcommand\specLeaves{\mathrm{Spec}(\Partition)}

\newcommand\hcl[1]{\mathrm{hcl}(#1)}
\newcommand\hclS[1]{\mathrm{hcl}_{S}(#1)}

\newcommand\typeInternal{{\rm(a)}}
\newcommand\typeBd{{\rm(b)}}

\newcommand\typeBdOne{{\rm(b1)}}
\newcommand\typeBdMany{{\rm(b2)}}

\newcommand\typeGlued{{\rm(c)}}
\newcommand\typeCycle{{\rm(c1)}}
\newcommand\typeReduce{{\rm(c2)}}

\newcommand\typeOneSideA{{\rm(c31)}}
\newcommand\typeOneSideB{{\rm(c32)}}
\newcommand\typeSpec{{\rm(c33)}}

\newcommand\Mman{\stripSurf}
\newcommand\tMman{\widetilde{\Mman}}
\newcommand\tPartition{\widetilde{\Partition}}
\newcommand\tleaf{\widetilde{\leaf}}

\newcommand\Upt[2]{U_{#1}^{#2}}
\newcommand\Uz[1]{\Upt{z}{#1}}
\newcommand\sign{\sigma}
\newcommand\ssign{s}

\newcommand\Vfam{\mathcal{V}}
\newcommand\Vpt[2]{\Vfam_{#1}^{#2}}
\newcommand\Vz[1]{\Vpt{z}{#1}}
\newcommand\hp{\hat{p}}

\newcommand\Nsp{N}
\newcommand\NFol{\mathcal{F}}
\newcommand\Fpt[1]{\mathcal{F}_{#1}}
\newcommand\Fpti[2]{\mathcal{G}_{#1,#2}}

\newcommand\eps{\varepsilon}

\begin{document}

\author{Sergiy Maksymenko, Eugene Polulyakh}
\email{maks@imath.kiev.ua, polulyah@imath.kiev.ua}
\address{Institute of Mathematics of NAS of Ukraine, Tereshchenkivska str. 3, Kyiv, 01004, Ukraine}

\title{Characterization of striped surfaces}

\begin{abstract}
Let $Z$ be a non-compact two-dimensional manifold and $\Delta$ be a one-dimensional foliation of $Z$ such that $\partial Z$ consists of leaves of $\Delta$ and each leaf of $\Delta$ is non-compact closed subset of $Z$.
We obtain a characterization of a subclass of such foliated surfaces $(Z,\Delta)$ glued from open strips $\mathbb{R}\times(0,1)$ with boundary leaves along some of their boundary intervals.
\end{abstract}

\keywords{foliations, striped surface}
\subjclass[2010]{%
 57R30, 
}

\maketitle

\section{Introduction}
Let $\stripSurf$ be a non-compact two-dimensional manifold and $\Partition$ be a one-dimensional foliation on $\stripSurf$ such that each leaf $\omega$ of $\Partition$ is homeomorphic to $\bR$ and is a closed subset of $\stripSurf$.

This kind of foliations was studied by W.~Kaplan~\cite{Kaplan:DJM:1940}, \cite{Kaplan:DJM:1941}, where he proved that every such foliation on $\bR^2$ has the following properties.
\begin{enumerate}[leftmargin=*, label=(\arabic*)]
\item\label{enum:Kaplan:level_sets}
There exists a pseudoharmonic function $f:\bR^2\to\bR$ taking constant values along leaves of $\Partition$ and ``strictly monotone in directions transversal to leaves'', see W.~Boothby~\cite{Boothby:AJM_1:1951}, \cite{Boothby:AJM_2:1951}, M.~Morse and J.~Jenkins~\cite{JenkinsMorse:AJM:1952}, M.~Morse~\cite{Morse:FM:1952} for further developments.

\item\label{enum:Kaplan:strips}
There exist at most countable family of leaves $\{\leaf_i\}_{i\in J}$ such that for every connected component $\strip$ of $\bR^2\setminus\{\leaf_i\}_{i\in J}$ one can find a homeomorphism $\phi:\strip\to\bR \times (0,1)$ sending the leaves in $\strip$ onto horizontal lines $\bR\times\{t\}$, $i\in(0,1)$.
\end{enumerate}
However the procedure of cutting along leaves $\leaf_i$ was not \textit{canonical}, as Kaplan tried to minimize the total number of strips, and for that reason the closures of connected components $\bR^2\setminus\{\leaf_i\}_{i\in J}$ can have a complicated structure.
In particular, the above homeomorphism $\phi$ does not always extend to an embedding of $\overline{\strip}$ into $\bR\times[0,1]$.

In~\cite{MaksymenkoPolulyakh:PGC:2015} the authors of the present paper introduced and studied a class of foliated surfaces $(\stripSurf,\Partition)$, called \myemph{striped}, glued from strips $S$ being open subsets of $\bR\times[0,1]$ and containing $\bR\times (0,1)$.

Further in~\cite[Theorem~1.8]{MaksymenkoPolulyakh:MFAT:2016} they also characterized a subclass of striped surfaces having the property that \textit{the quotient map $\pr:\stripSurf\to\stripSurf/\Partition$ into the set of leaves is a locally trivial fibration with fiber $\bR$} in terms of the so-called \myemph{special} leaves, see Definition~\ref{def:special_leaf}.
Such leaves are points where $\stripSurf/\Partition$ fails to be Hausdorff.
It was shown that under the above assumption a foliated surface $(\stripSurf,\Partition)$ admits ``striped structure'' if and only if the family of special leaves is locally finite, see Theorem~\ref{th:charact_stripedsurf_old} below.

In the present paper we introduce a more general notion of \myemph{singular} leaves, see Definition~\ref{def:regular_leaf}, corresponding to points of $\stripSurf/\Partition$ that do not have an open neighbourhood $\Usp$ such the pair $(\overline{\Usp},\Usp)$ is homeomoprhic with $\bigl( [0,1], (0,1) \bigr)$.

The aim of the present paper is to give a complete characterization of striped surfaces: we show that a foliated surface $(\stripSurf,\Partition)$ admits a ``striped structure'' if and only if the family of all singular leaves is locally finite, see Theorem~\ref{th:charact_stripedsurf}.

\subsection*{Structure of the paper}
In~\S\ref{sect:preliminaries} we recall necessary definitions of striped surfaces, types of leaves and relationships between them.
\S\ref{sect:cutting_along_isol_leaves} is devoted to proof of a technical result about cutting a foliated surface along isolated leaves, see Theorem~\ref{th:cutting}.
\S\ref{sect:main_results} contains main results of the paper: characterization of strips and striped surfaces, see Theorems~\ref{th:strip_charact} and~\ref{th:charact_stripedsurf}, based on Theorem~\ref{th:charact_stripedsurf_old_ext} being an extension of~\cite[Theorem~1.8]{MaksymenkoPolulyakh:MFAT:2016} and proved in~\S\ref{sect:proof:th:charact_stripedsurf_old_ext}.

\section{Preliminaries}\label{sect:preliminaries}
\subsection*{Space of leaves of a foliation}
A \myemph{foliated surface} is a pair $(\stripSurf,\Partition)$, where $\stripSurf$ is a two-dimensional manifold and $\Partition$ is a one-dimensional foliation on $\stripSurf$ such that each connected component of $\partial\stripSurf$ is a leaf of $\Partition$.

Denote by $\Ysp = \stripSurf/\Partition$ the set of all leaves of $\Partition$, and let $\pr:\stripSurf\to\stripSurf/\Partition$ be the natural projection associating to each $z\in\stripSurf$ the leaf of $\Partition$ containing $z$.
We will endow $\Ysp$ with the quotient topology, so a subset $\Usp\subset\Ysp$ is open if and only if $\pr^{-1}(\Usp)$ is open in $\stripSurf$.
It is well known, that then $\pr$ becomes an open map, see e.g.~\cite[Proposition~1.5]{Godbillon:F:1991} or~\cite[Theorem~4.10]{Tamura:Fol:ENG:1992}.

For a subset $\Usp\subset\stripSurf$ its \myemph{saturation}, $\Sat{\Usp}$, with respect to $\Partition$ is the union of all leaves of $\Partition$ intersecting $\Usp$.
Equivalently, $\Sat{\Usp}= \pr^{-1}(\pr(\Usp))$.
Notice that the openness of $\pr$ means that for each open $\Usp\subset\stripSurf$ its saturation $\Sat{\Usp}$ is open as well.
It easily follows from openness of $\pr$ that for each saturated subset $\Usp\subset\Ysp$ its closure $\overline{\Usp}$ is saturated as well.

If $\Usp$ is open and saturated, then by $\Partition_{\Usp}$ we will denote the induced foliation on $\Usp$ whose leaves are connected components of the intersections $\leaf\cap\Usp$ over all $\leaf\in\Partition$.

By the \myemph{Hausdorff closure}, $\hcl{y}$, of a point $y\in\Ysp$ we will mean the intersection of closures of all neighbourhoods of $y$, that is
\[
\hcl{y} = \bigcap_{\text{$\Vsp$ is a neighbourhood of $y$}} \overline{\Vsp}.
\]
Evidently, $y\in\hcl{y}$.
Moreover, $\Ysp$ is Hausdorff if and only if $\{y\}=\hcl{y}$ for each $y\in\Ysp$.

We will say that a point $y\in\Ysp$ is \myemph{special}%
\footnote{
See also~\cite[Definition~3]{HaefligerReeb:EM:1957} and \cite{GodbillonReeb:EM:1966} where such points are called \myemph{branch}.}
whenever $\{y\} \not=\hcl{y}$.

Similarly, for a leaf $\leaf\in\Partition$ let
\begin{align*}
\hcl{\leaf} &= \bigcap_{N(\leaf)} \overline{\Sat{N(\leaf)}}, &
\hclS{\leaf} &= \bigcap_{N_S(\leaf)} \overline{N_S(\leaf)},
\end{align*}
where $N(\leaf)$ runs over all open neighbourhoods of $\leaf$ and $N_S(\leaf)$ runs over all open saturated neighbourhoods of $\leaf$.
\begin{lemma}{\rm\cite[Lemma~3.5]{MaksymenkoPolulyakh:MFAT:2016}}
Let $\leaf\in\Partition$ and $y = \pr(\leaf)$.
Then
\begin{align*}
\hcl{\leaf} &= \hclS{\leaf} = \pr^{-1}(\hcl{y}), &
\pr(\hcl{\leaf}) &= \hcl{y}.
\end{align*}
\end{lemma}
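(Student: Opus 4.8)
The plan is to prove the two identities $\hcl{\leaf} = \hclS{\leaf} = \pr^{-1}(\hcl{y})$ and $\pr(\hcl{\leaf}) = \hcl{y}$ by exploiting the openness of the projection $\pr$ in a chain of mutual inclusions. The central observation to establish first is that for an open neighbourhood $N(\leaf)$ of the leaf $\leaf$, the set $\Sat{N(\leaf)}$ is an open \emph{saturated} neighbourhood of $\leaf$, and conversely every open saturated neighbourhood $N_S(\leaf)$ is in particular an open neighbourhood of $\leaf$; moreover $\Sat{N_S(\leaf)} = N_S(\leaf)$. Thus the family $\{\Sat{N(\leaf)}\}$ of sets over which the first intersection is taken is cofinal (in the reverse-inclusion order) with the family $\{N_S(\leaf)\}$ over which the second is taken: every $\Sat{N(\leaf)}$ contains some $N_S(\leaf)$ of the second type (namely itself), and every $N_S(\leaf)$ equals $\Sat{N_S(\leaf)}$ which is of the first type. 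Since intersecting closures over a cofinal subfamily gives the same result, this immediately yields $\hcl{\leaf} = \hclS{\leaf}$.

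Next I would identify these with $\pr^{-1}(\hcl{y})$. For one inclusion, take $z \in \pr^{-1}(\hcl{y})$, i.e. $\pr(z) \in \hcl{y}$, and let $N_S(\leaf)$ be any open saturated neighbourhood of $\leaf$; then $\pr(N_S(\leaf))$ is an open neighbourhood of $y$ (openness of $\pr$), so $\pr(z) \in \overline{\pr(N_S(\leaf))}$. I must then upgrade this to $z \in \overline{N_S(\leaf)}$: given any open neighbourhood $W$ of $z$, its saturation $\Sat{W}$ is open and meets $\pr^{-1}\big(\overline{\pr(N_S(\leaf))}\big)$ near $\pr(z)$, hence $\pr(\Sat W)$ is an open neighbourhood of $\pr(z)$ and meets $\pr(N_S(\leaf))$; pulling back and using saturation of $N_S(\leaf)$ gives $W \cap N_S(\leaf) \neq \emptyset$. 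So $z \in \overline{N_S(\leaf)}$ for every such $N_S$, whence $z \in \hclS{\leaf}$. For the reverse inclusion, if $z \in \hclS{\leaf}$ then for every open neighbourhood $V$ of $y$ the set $\pr^{-1}(V)$ is an open saturated neighbourhood of $\leaf$, so $z \in \overline{\pr^{-1}(V)}$; since $\pr$ is open one checks $\pr(\overline{\pr^{-1}(V)}) \subseteq \overline{V}$ (any open neighbourhood of $\pr(z)$ pulls back to an open set meeting $\pr^{-1}(V)$, hence its image meets $V$), giving $\pr(z) \in \overline{V}$ for all $V$, i.e. $\pr(z) \in \hcl{y}$, so $z \in \pr^{-1}(\hcl{y})$.

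Finally, for $\pr(\hcl{\leaf}) = \hcl{y}$: the inclusion $\pr(\hcl{\leaf}) = \pr(\pr^{-1}(\hcl y)) \subseteq \hcl{y}$ is immediate, and equality holds because $\pr$ is surjective so $\pr(\pr^{-1}(\hcl y)) = \hcl y$; alternatively, $\hcl{\leaf}$ being saturated (it equals $\pr^{-1}(\hcl y)$) forces $\pr(\hcl \leaf) = \hcl y$ exactly.

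The routine bookkeeping is harmless, but the step I expect to be the genuine crux is the ``upgrading'' argument: passing between a statement about $z$ meeting a saturated set in $\stripSurf$ and the corresponding statement about $\pr(z)$ meeting the image in $\Ysp$, in the closure (rather than just open-set) setting. This is where openness of $\pr$ must be used most carefully — both to push open neighbourhoods forward and to control closures via $\pr(\overline{A}) \subseteq \overline{\pr(A)}$ together with the fact that, for saturated $A$, a point of $\overline{\pr(A)}$ lifts into $\overline{A}$. Keeping straight which sets are saturated (and hence for which ones $\pr^{-1}\pr$ is the identity) throughout these manipulations is the main thing to be careful about.
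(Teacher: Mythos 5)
Your proof is correct: the key facts you use (the family of saturations $\Sat{N(\leaf)}$ of open neighbourhoods coincides with the family of open saturated neighbourhoods, and $\overline{\pr^{-1}(V)}=\pr^{-1}(\overline{V})$ for an open map $\pr$) are exactly what makes the lemma a consequence of openness of $\pr$, which is the only indication the paper gives, the detailed argument being delegated to the cited reference. No gaps; this matches the intended approach.
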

This lemma is a consequence of openness of the projection $\pr$.
It also allows to give the following definition:

\begin{definition}\label{def:special_leaf}
A leaf $\leaf\in\Partition$ will be called \myemph{special}%
\footnote{
In~\cite{MaksymenkoPolulyakh:PGC:2015} authors introduced a class of ``striped'' foliated surfaces and used the term ``\myemph{special leaf}'' in a slightly distinct sense.
Further in~\cite{MaksymenkoPolulyakh:MFAT:2016}, \cite{MaksymenkoPolulyakh:PGC:2016}, and~\cite{MaksymenkoPolulyakhSoroka:PICG:2017} they classified a certain subclass of striped surfaces in terms of special leaves but in the sense of Definition~\ref{def:special_leaf}.
We will clarify the difference of definition in~\cite{MaksymenkoPolulyakh:PGC:2015} with Definition~\ref{def:special_leaf}, see Remark~\ref{rem:discussion_of_spec_leaves}.
}
whenever either of the following equivalent conditions hold:
\begin{itemize}
\item  $\leaf \not= \hcl{\leaf}$;
\item  $\leaf \not= \hclS{\leaf}$;
\item  $y=\pr(\leaf)$ is a special point of $\Ysp$, that is $y\not=\hcl{y}$.
\end{itemize}
\end{definition}

A homeomorphism $\dif:\stripSurf \to \stripSurf'$ between foliated surfaces $(\stripSurf,\Partition)$ and $(\stripSurf',\Partition')$ will be called \myemph{foliated} if for each leaf $\leaf\in\Partition$ its image, $\dif(\leaf)$, is a leaf of $\Partition'$.

\begin{definition}\label{def:local_crossections}
Fix any $a<b\in\bR$ and let $J=[a,b)$ or $J=(a,b)$.
Let also $\gamma:J \to \stripSurf$ be a continuous map such that $\gamma(J\cap\{a\})\in\partial\stripSurf$.

Then $\gamma$ is a \myemph{cross section} of $\Partition$, whenever $\pr\circ\gamma:J\to\stripSurf/\Partition$ is injective, that is for distinct $u,v\in J$ their images $\gamma(u)$ and $\gamma(v)$ belongs to distinct leaves of $\Partition$.
Also $\gamma$ is a \myemph{local cross section} of $\Partition$, whenever $\pr\circ\gamma:J\to\stripSurf/\Partition$ is locally injective.
\end{definition}

\begin{theorem}\label{th:cross_sections}
{\rm\cite[Theorem~2.8]{MaksymenkoPolulyakh:PGC:2016}}
Let $(\stripSurf,\Partition)$ be a connected foliated surface with countable base such that each leaf of $\Partition$ is non-compact and is also a closed subset of $\stripSurf$.
Suppose also that the family of all \myemph{special} leaves in the sense of Definition~\ref{def:special_leaf} is locally finite.
Then the following conditions are equivalent:
\begin{enumerate}[leftmargin=*, label={\rm(\Alph*)}]
\item\label{enum:th:charact_stripedsurf_old:eq:loc_triv_fibr}
the quotient map $\pr:\stripSurf\to\stripSurf/\Partition$ into the space of leaves if a locally trivial fibration with fiber $\bR$ and $\stripSurf/\Partition$ is locally homeomorphic with $[0,1)$ (though it is not necessarily a Hausdorff space);
\item\label{enum:th:charact_stripedsurf_old:eq:satur_open_nbh}
for each leaf $\leaf$ there exists an open saturated neighbourhood foliated homeomorphic with $\bR \times \Vsp$, where $\Vsp$ is an open subset of $[0,1)$;
\item\label{enum:th:charact_stripedsurf_old:eq:cross_sect}
each leaf of $\Partition$ admits a cross section.
\end{enumerate}
\end{theorem}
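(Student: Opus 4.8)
We establish the equivalences by proving (A) $\Leftrightarrow$ (B), (B) $\Rightarrow$ (C), and (C) $\Rightarrow$ (B); all are formal except the last, which carries the weight of the theorem. For (A) $\Rightarrow$ (B) fix $\leaf\in\Partition$, set $y=\pr(\leaf)$, and choose a neighbourhood $W$ of $y$ that lies inside a trivializing neighbourhood of $\pr$ and is homeomorphic with an open subset $\Vsp\subset[0,1)$; both conditions are inherited by smaller neighbourhoods, so such $W$ exists. Then $\pr^{-1}(W)$ is open and saturated, and the trivialization $\pr^{-1}(W)\cong W\times\bR$ carries the fibres of $\pr$ --- that is, the leaves of $\Partition$ --- onto the lines $\{w\}\times\bR$; interchanging the factors and identifying $W$ with $\Vsp$ turns this into a foliated homeomorphism $\pr^{-1}(W)\cong\bR\times\Vsp$, so (B) holds. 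For (B) $\Rightarrow$ (A), the saturated product neighbourhoods $\Nsp_{\leaf}\cong\bR\times\Vsp_{\leaf}$, with $\Vsp_{\leaf}\subset[0,1)$ open, cover $\stripSurf$; since $\pr$ is open and $\Nsp_{\leaf}$ is saturated, $\pr(\Nsp_{\leaf})$ is open, satisfies $\pr^{-1}(\pr(\Nsp_{\leaf}))=\Nsp_{\leaf}$, is homeomorphic with $\Vsp_{\leaf}\subset[0,1)$, and $\pr$ restricts over it, through the product structure, to the trivial projection with fibre $\bR$; this gives (A).

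For (B) $\Rightarrow$ (C), given $\leaf$ take $\Nsp\cong\bR\times\Vsp$ as above, with $\leaf$ corresponding to $\bR\times\{t_{0}\}$ for some $t_{0}\in\Vsp$. Since a foliated homeomorphism sends $\partial\stripSurf\cap\Nsp$ onto $\bR\times(\Vsp\cap\{0\})$, the leaf $\leaf$ lies in $\partial\stripSurf$ precisely when $t_{0}=0$. Choose a connected neighbourhood $J$ of $t_{0}$ in $\Vsp$ --- namely $J=(t_{0}-\eps,t_{0}+\eps)$ if $t_{0}>0$ and $J=[0,\eps)$ if $t_{0}=0$, with $\eps$ small --- and put $\gamma(t)=(0,t)\in\Nsp$. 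Then $\pr\circ\gamma$ is injective because distinct parameters lie on distinct leaves, and in the half-open case $\gamma(0)=(0,0)\in\partial\stripSurf$; hence $\gamma$ is a cross section of $\leaf$, proving (C).

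The substantial implication is (C) $\Rightarrow$ (B). Fix $\leaf\in\Partition$ and a cross section $\gamma\colon J\to\stripSurf$ of $\leaf$, so that $\gamma(j_{0})\in\leaf$ for some $j_{0}\in J$. We would first build a foliated product neighbourhood of $\leaf$ subordinate to $\gamma$: every point of $\leaf$ has a flow-box neighbourhood in which $\Partition$ is horizontal, and since $\leaf\cong\bR$ is simply connected and is a closed subset of $\stripSurf$, these flow boxes can be chained coherently along $\leaf$ with no holonomy obstruction; transporting the arc $\gamma$ through the chain then yields a foliated embedding $\dif\colon\bR\times K\to\stripSurf$ onto a neighbourhood $\Nsp_{0}$ of $\leaf$, where $K=(-1,1)$ if $\leaf\cap\partial\stripSurf=\emptyset$ and $K=[0,1)$ otherwise, with $\dif(\bR\times\{0\})=\leaf$, with $\dif(\{0\}\times K)$ a subarc of $\gamma$ up to reparametrization, and --- using injectivity of $\pr\circ\gamma$ --- with the slices $\dif(\bR\times\{t\})$, $t\in K$, lying on pairwise distinct leaves.

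The main obstacle is that $\Nsp_{0}$ need not be saturated: the full leaf through a point $\dif(0,t)$ may properly contain the slice $\dif(\bR\times\{t\})$ and leave $\Nsp_{0}$. We would rule this out for all $t$ in a sufficiently small sub-neighbourhood of $0$ in $K$, using that the family of special leaves is locally finite. Indeed, if slices arbitrarily close to $\leaf$ were to lie on leaves escaping $\Nsp_{0}$, then --- because every leaf is non-compact and closed and $\stripSurf$ is a surface --- such a leaf would be forced to re-enter the flow boxes chained along $\leaf$, and from this one extracts a sequence of distinct leaves converging to $\leaf$ that are not separable from $\leaf$, nor pairwise separable, in $\Ysp$; thus $\leaf$ would be an accumulation point of the family of special leaves, contrary to the hypothesis. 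After this shrinking $\Nsp_{0}$ is saturated, so $\dif$ is a foliated homeomorphism of $\bR\times K$ onto an open saturated neighbourhood of $\leaf$ with $K\subset[0,1)$; as $\leaf$ was arbitrary, (B) follows. Making precise the claim that an escaping nearby leaf produces infinitely many special leaves clustering at $\leaf$ --- equivalently, controlling the way a leaf can re-enter the chained flow boxes near $\leaf$ --- is the step we expect to be the most delicate, and it is exactly where the hypotheses that leaves are non-compact and closed, and that special leaves are locally finite, all come into play.
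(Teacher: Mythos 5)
This theorem is quoted from \cite[Theorem~2.8]{MaksymenkoPolulyakh:PGC:2016}; the present paper gives no proof of it, so your argument can only be judged on its own merits. Your treatment of \ref{enum:th:charact_stripedsurf_old:eq:loc_triv_fibr}$\Leftrightarrow$\ref{enum:th:charact_stripedsurf_old:eq:satur_open_nbh} and of \ref{enum:th:charact_stripedsurf_old:eq:satur_open_nbh}$\Rightarrow$\ref{enum:th:charact_stripedsurf_old:eq:cross_sect} is essentially fine. The gap is in \ref{enum:th:charact_stripedsurf_old:eq:cross_sect}$\Rightarrow$\ref{enum:th:charact_stripedsurf_old:eq:satur_open_nbh}, exactly at the step you flag as delicate: the claim that if the chained flow-box neighbourhood $\Nsp_{0}$ is non-saturated arbitrarily close to $\leaf$, then the escaping leaves must re-enter the flow boxes and produce infinitely many special leaves accumulating at $\leaf$. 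That claim is false, and the whole strategy of \emph{shrinking} $\Nsp_{0}$ until it is saturated cannot be repaired.

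Concretely, take the standard branching surface: a strip $\strip_{1}=\bR\times(0,1)\cup(\bR\setminus\{0\})\times\{0\}$ whose lower side consists of two intervals $\bdX_{\pm}=\{\pm x>0,\,t=0\}$, and two strips $\strip_{2},\strip_{3}=\bR\times(-1,0]$ glued along $\partial_{+}\strip_{2}\mapsto\bdX_{-}$ and $\partial_{+}\strip_{3}\mapsto\bdX_{+}$. All hypotheses hold (there are exactly two special leaves $\leaf_{\pm}=\qmap(\bdX_{\pm})$, and every leaf admits a cross section), and condition \ref{enum:th:charact_stripedsurf_old:eq:satur_open_nbh} holds at $\leaf_{+}$: the saturated set $W=\leaf_{+}\cup\bigcup_{0<t<\delta}\bR\times\{t\}\,(\subset\strip_{1})\cup\bigcup_{-\delta<t<0}\bR\times\{t\}\,(\subset\strip_{3})$ is foliated homeomorphic to $\bR\times(-\delta,\delta)$, being a type-\typeReduce\ gluing of two strips. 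However, any union of flow boxes chained along $\leaf_{+}$ meets $\strip_{1}$ in a set of the form $\{(x,t): x>0,\ 0\le t<\eps(x)\}$, so for \emph{every} height $t\in(0,\delta)$ the corresponding slice is the proper subarc $(a(t),\infty)\times\{t\}$ of the full leaf $\bR\times\{t\}$; the escaping part $(-\infty,a(t)]\times\{t\}$ accumulates on $\leaf_{-}$, never re-enters $\Nsp_{0}$, and no amount of shrinking in the transverse direction makes $\Nsp_{0}$ saturated --- yet only finitely many special leaves are present, so no contradiction with local finiteness can be extracted. (Even the link to special leaves is illusory: presenting the trivial foliated strip by gluing $\bR\times[0,1)$ to $\bR\times(-1,0)\cup(0,\infty)\times\{0\}$ along a stretching homeomorphism of the full sides yields a legitimate $\Nsp_{0}$ around the glued leaf that is non-saturated at all scales although the foliation has no special leaves whatsoever.) The actual content of this implication is the opposite move: one must \emph{enlarge} $\Nsp_{0}$ to its saturation and prove that the saturation of a suitably short transversal is still a product, which is where the local finiteness of $\specLeaves$ and the structure of the non-Hausdorff one-manifold $\stripSurf/\Partition$ genuinely enter; this step is absent from your proposal.
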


\begin{definition}\label{def:regular_leaf}
A leaf $\leaf \subset\Int{\stripSurf}$ will be called \myemph{regular} if there exists a saturated neighbourhood $\Usp$ of $\leaf$ such that the pair $(\Cl{\Usp},\Usp)$ is foliated homeomorphic with $\bigl(\bR\times[-1,1], \bR\times(-1,1))$ via a foliated homeomorphism sending $\leaf$ onto $\bR\times0$.

Similarly, a leaf $\leaf \subset\partial\stripSurf$ is \myemph{regular} if there exists a saturated neighbourhood $\Usp$ of $\leaf$ such that the pair $(\Cl{\Usp},\Usp)$ is foliated homeomorphic with $\bigl(\bR\times[0,1], \bR\times[0,1))$ via a foliated homeomorphism sending $\leaf$ onto $\bR\times0$.

A leaf being not regular will be called \myemph{singular}.
\end{definition}


Let $\specLeaves$ be the family of all special leaves of $\Partition$ and $\singLeaves$ be the family of all singular leaves.

\begin{lemma}\label{lm:props_of_regular_leaves}
Every regular leaf is non-special, that is every special leaf is singular, and so $\specLeaves \subset \singLeaves$.
\end{lemma}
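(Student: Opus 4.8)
The plan is to prove the equivalent assertion that every \emph{regular} leaf $\leaf$ is non-special, i.e.\ that $\hcl{\leaf}=\leaf$; the remaining two formulations in the lemma then follow at once from Definition~\ref{def:special_leaf}, and $\specLeaves\subset\singLeaves$ is merely the contrapositive.

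So I would let $\leaf$ be regular and fix a saturated neighbourhood $\Usp$ of $\leaf$ together with a foliated homeomorphism $\phi\colon(\Cl{\Usp},\Usp)\to(\bR\times[-1,1],\,\bR\times(-1,1))$ with $\phi(\leaf)=\bR\times0$; the case $\leaf\subset\partial\stripSurf$ is handled identically with $[0,1]$, $[0,1)$ in place of $[-1,1]$, $(-1,1)$. I may assume $\Usp$ is open in $\stripSurf$: if not, replace $\Usp$ by $\Int\Usp$, which is still a saturated neighbourhood of $\leaf$, since the interior of a saturated set is saturated by openness of $\pr$ and $\leaf\subset\Int\Usp$; the argument below only involves arbitrarily small ``slices'' around $\leaf$, so this shrinking is harmless. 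Recall that $\Cl{\Usp}$ is, by construction, closed in $\stripSurf$.

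Next I would, for every $0<\eps<1$, consider the ``tube'' $W_\eps:=\phi^{-1}\!\bigl(\bR\times(-\eps,\eps)\bigr)$ and record two facts. First, $W_\eps$ is an open saturated neighbourhood of $\leaf$: it is saturated because $\phi$ is foliated and $\bR\times(-\eps,\eps)$ is a union of leaves, and it is open in $\stripSurf$ because $\bR\times(-\eps,\eps)$ is open in $\bR\times[-1,1]$ and contained in $\bR\times(-1,1)$, so that $W_\eps$ is open in $\Usp$, which in turn is open in $\stripSurf$. Second, since $W_\eps\subset\Cl{\Usp}$ and $\Cl{\Usp}$ is closed in $\stripSurf$, the closure of $W_\eps$ in $\stripSurf$ coincides with its closure inside $\Cl{\Usp}$, which $\phi$ carries to $\Cl{\bR\times(-\eps,\eps)}=\bR\times[-\eps,\eps]$; hence $\Cl{W_\eps}=\phi^{-1}\!\bigl(\bR\times[-\eps,\eps]\bigr)$.

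Finally I would invoke the equality $\hcl{\leaf}=\hclS{\leaf}=\bigcap_{N_S(\leaf)}\Cl{N_S(\leaf)}$ recalled just before Definition~\ref{def:special_leaf}: since each $W_\eps$ occurs among the open saturated neighbourhoods $N_S(\leaf)$, this intersection is contained in $\bigcap_{0<\eps<1}\Cl{W_\eps}$, whence
\[
\leaf\ \subset\ \hcl{\leaf}\ \subset\ \bigcap_{0<\eps<1}\phi^{-1}\!\bigl(\bR\times[-\eps,\eps]\bigr)\ =\ \phi^{-1}(\bR\times 0)\ =\ \leaf .
\]
Thus $\hcl{\leaf}=\leaf$, so $\leaf$ is non-special, as required. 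I do not expect a serious obstacle here: the entire content is the computation of $\Cl{W_\eps}$, and the only point that needs care is the passage between the subspace topology of $\Cl{\Usp}$ and the ambient topology of $\stripSurf$ — which works precisely because $\Usp$ is open and saturated while $\Cl{\Usp}$ is closed in $\stripSurf$.
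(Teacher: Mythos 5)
Your proposal is correct and follows essentially the same route as the paper's own proof: both take the nested saturated tubes $\phi^{-1}\bigl(\bR\times(-\eps,\eps)\bigr)$, compute their closures as $\phi^{-1}\bigl(\bR\times[-\eps,\eps]\bigr)$, and intersect to get $\hcl{\leaf}=\leaf$. Your extra care about the openness of $\Usp$ and the subspace-versus-ambient closure is a welcome but inessential refinement of the same argument.
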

\begin{proof}
Let $\leaf\subset\Int{\stripSurf}$ be a regular leaf belonging to the interior of $\stripSurf$, so there exists a saturated neighbourhood $\Usp$ and a foliated homeomorphism
\[ \phi:(\Cl{\Usp},\Usp) \to \bigl(\bR\times[-1,1], \bR\times(-1,1)\bigr)\]
such that $\phi(\leaf) = \bR\times 0$.
Then for each $t\in(0,1)$ the set $\Usp_{t} = \phi^{-1}\bigl(\bR\times(-t, t)\bigr)$ is an open foliated neighbourhood of $\leaf$ and $\Cl{\Usp_{t}} = \phi^{-1}\bigl(\bR\times[-t, t]\bigr)$.
Hence
\[
\hcl{\leaf} \subset \mathop{\cap}\limits_{t\in(0,1)} \Cl{\Usp_{t}}
    = \phi^{-1}\Bigl(\, \mathop{\cap}\limits_{t\in(0,1)} \bR\times[-t, t] \,\Bigr) = \phi^{-1}\bigl(\bR\times0\bigr) = \leaf,
\]
so $\leaf$ is non-special.

The case $\leaf\subset\partial\stripSurf$ is similar and we leave it for the reader.
\end{proof}

\subsection*{Strips}
A subset $\strip \subset \bR^2$ will be called a \myemph{strip} if there exist $u<v\in\bR$ such that
\begin{enumerate}[label=(\roman*), topsep=0pt]
\item\label{enum:strip:contains_open_strip}
$\bR \times (u,v) \ \subset \ \strip \ \subset \  \bR \times [u,v]$;
\item\label{enum:strip:open}
$\strip$ is open in the topology of $\bR \times [u,v]$.
\end{enumerate}
For such a strip we will use the following notation:
\begin{align*}
	\partial_{-}\strip &:= \strip \ \cap \ \bR \times \lbrace u \rbrace, &
	\partial_{+}\strip &:= \strip \ \cap \ \bR \times \lbrace v \rbrace, \\
	\partial\strip &:= \partial_{-}\strip \ \cup \ \partial_{+}\strip, &
	\Int\strip &:= \bR\times(u,v).
\end{align*}
Notice that the boundary $\partial\strip$ is open in $\bR\times\{u,v\}$ and therefore it is a disjoint union of at most countably many open (possibly unbounded) intervals.

Evidently, each strip $\strip$ possesses an oriented one-dimensional foliation into horizontal lines $\bR \times t$, $t \in (u,v)$, and boundary intervals of $\partial\strip$.
We will call that foliation \myemph{canonical}.

\subsection*{Striped atlas}
Let $\stripSurf$ be a two-dimensional topological manifold (surface) and $\preStripSurf = \bigsqcup \limits_{\stInd \in \StInd} \strip_{\stInd}$ be at most countable family of mutually disjoint strips.
A \myemph{striped atlas} on $\stripSurf$ is a map $\qmap: \preStripSurf \to \stripSurf$ such that
\begin{enumerate}[leftmargin=*, label=(\arabic*)]
\item
$\qmap$ is a \myemph{quotient} map, i.e. it is continuous, surjective, and a subset $\Usp\subset\stripSurf$ is open if and only if $\qmap^{-1}(\Usp) \cap \strip_{\stInd}$ is open in $\strip_{\stInd}$ for each $\stInd\in\StInd$;

\item
there exist two disjoint families $\mathcal{X} = \{\bdX_\bdGlueInd\}_{\bdGlueInd\in\BdGlueInd}$ and $\mathcal{Y} = \{\bdY_\bdGlueInd\}_{\bdGlueInd\in\BdGlueInd}$ of mutually distinct boundary intervals of $\preStripSurf$ enumerated by the same set of indexes $\BdGlueInd$ such that
\begin{enumerate}[leftmargin=*, label=(\alph*)]
\item
$\qmap$ is injective on $\preStripSurf \setminus (\mathcal{X} \cup \mathcal{Y})$;
\item
$\qmap(\bdX_\bdGlueInd) = \qmap(\bdY_\bdGlueInd)$ for each $\bdGlueInd\in\BdGlueInd$;
\item\label{enum:striped_atlas:XY:embeddings}
the restrictions $\qmap|_{\bdX_\bdGlueInd}: \bdX_\bdGlueInd \to \qmap(\bdX_\bdGlueInd)$ and
$\qmap|_{\bdY_\bdGlueInd}: \bdY_\bdGlueInd \to \qmap(\bdY_\bdGlueInd)$ are embeddings with closed images;
\end{enumerate}
\end{enumerate}

Notice that each striped atlas $\qmap$ induces on $\stripSurf$ a one-dimensional foliation obtained from canonical foliations on the corresponding strips $S_{\lambda}$.
We will call it the \myemph{canonical} foliation associated to the striped atlas $\qmap$ and denote by $\Partition$.
Evidently, each leaf of $\Partition$ is a homeomorphic image of $\bR$ and is also a closed subset of $\stripSurf$.

A foliated surface $(\stripSurf,\Partition)$ will be called \myemph{striped} whenever $\stripSurf$ has a striped atlas for which $\Partition$ is a canonical foliation.

Notice also that for each $\bdGlueInd \in \BdGlueInd$ we have a ``gluing'' homeomorphism
\begin{align}\label{equ:gluing_map}
\phi_{\bdGlueInd} &= \bigl(\qmap|_{\bdX_{\bdGlueInd}}\bigr)^{-1} \circ  \qmap|_{\bdY_{\bdGlueInd}}: \ \bdY_{\bdGlueInd} \to \bdX_{\bdGlueInd},
\end{align}
so a striped surface is obtained from a family of strips by gluing them along certain boundary intervals by homeomorphisms $\phi_{\bdGlueInd}$, see Figure~\ref{fig:type_of_leaves_gen}.

\graphicspath{{pictures/}}
\begin{figure}[ht]
\includegraphics[height=2cm]{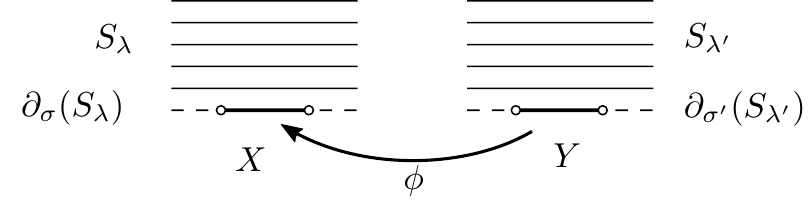}
\caption{Gluing boundary intervals}
\label{fig:type_of_leaves_gen}
\end{figure}

It is allowed to glue two strips along more than one pair of boundary components, and  one may also glue boundary components belonging to the same strip and even to \textit{the same side} of the same strip.

The latter possibility is the point of difference between the definition of \textit{special} leaves in~\cite{MaksymenkoPolulyakh:PGC:2015} and Definition~\ref{def:special_leaf}.

\subsection*{Standard foliated cylinder and M\"obius band}
Let $\strip=\bR\times[0,1]$, $\ssign=\pm1$, and $\phi_{\ssign}:\bR\times\{0\}\to\bR\times\{1\}$ be the homeomorphism given by $\phi_{\ssign}(x,0) = (\ssign x, 1)$.
Then the quotient mapping $\qmap: \strip \to \strip / \phi_{\ssign}$ is a striped atlas consisting of one strip.
The corresponding striped surface $\strip / \phi_{\ssign}$ will be called the \myemph{standard open cylinder} for $\ssign=+1$, and the \myemph{standard M\"obius band} for $\ssign=-1$.

\subsection*{Types of leaves of canonical foliation}
Let $\qmap: \bigsqcup \limits_{\stInd \in \StInd} \strip_{\stInd} \to \stripSurf$ be a striped atlas.
Then each leaf $\leaf$ of the associated canonical foliation $\Partition$ has precisely one of the following properties:

\begin{enumerate}[leftmargin=*, itemsep=1ex, label={\rm(\alph*)}]
\item[\typeInternal]
$\leaf = \qmap(\Int{\strip_{\stInd}})$ for some $\stInd\in\StInd$.

\item[\typeBd]
$\leaf \subset \qmap(\partial_{\sign}\strip_{\stInd}) \subset \partial\stripSurf$ for some $\stInd\in \StInd$ and $\sign\in\{-,+\}$.
This case splits into two subcases:

\begin{itemize}[itemsep=1ex]
\item[\typeBdOne]
$\leaf = \qmap(\partial_{\sign}\strip_{\bdGlueInd})$, so $\partial_{\sign}\strip_{\bdGlueInd}$ consists of a unique leaf;

\item[\typeBdMany]
$\leaf \subsetneq \qmap(\partial_{\sign}\strip_{\bdGlueInd})$, so $\partial_{\sign}\strip_{\bdGlueInd}$ contains more that one leaf.
\end{itemize}

\item[\typeGlued]
$\leaf = \qmap(\bdX_\bdGlueInd) = \qmap(\bdY_\bdGlueInd)$ for some $\bdGlueInd\in\BdGlueInd$, where $\bdX_\bdGlueInd \subset \partial_{\sign}\strip_{\stInd}$, $\bdY_\bdGlueInd \subset \partial_{\sign'}\strip_{\stInd'}$ for some
$\stInd,\stInd' \in \StInd$, and $\sign,\sign' \in \{-,+\}$.
This situation splits into the following three cases:

\begin{itemize}[itemsep=1ex]
\item[\typeCycle]
$\stInd = \stInd'$, $\bdX=\partial_{\sign}\strip_{\stInd}$, and $\bdY=\partial_{\sign'}\strip_{\stInd}$, so in this case $\sign'=-\sign$, that is we glue distinct sides of the same strip $\strip_{\stInd}$ and each of these sides consists of a unique interval;

\item[\typeReduce]
$\stInd \not= \stInd'$, $\bdX=\partial_{\sign}\strip_{\stInd}$, and $\bdY=\partial_{\sign'}\strip_{\stInd}$;

\item[\typeOneSideA]
$\stInd = \stInd'$, $\sign'=\sign$, and $\bdX \cup \bdY = \partial_{\sign}\strip_{\stInd}$;

\item[\typeOneSideB]
$\stInd = \stInd'$, $\sign'=\sign$, and $\bdX \cup \bdY \not= \partial_{\sign}\strip_{\stInd}$;

\item[\typeSpec]
all other cases.
\end{itemize}
\end{enumerate}

\begin{figure}[ht]
\includegraphics[height=2cm]{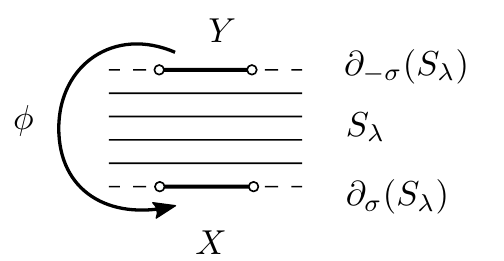}
\caption{Case \typeCycle {} ($\stInd' = \stInd$, $\sign' = -\sign$)}
\label{fig.type_of_leaves_c1}
\end{figure}

\begin{figure}[ht]
\includegraphics[height=2cm]{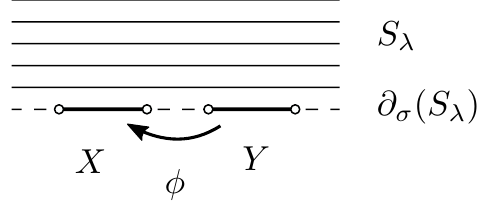}
\caption{Cases \typeOneSideA {} and \typeOneSideB {} ($\stInd' = \stInd$, $\sign' = \sign$)}
\label{fig.type_of_leaves_c31_and_c32}
\end{figure}

Thus the cases \typeOneSideA\ and \typeOneSideB\ correspond to gluing boundary intervals belonging to the same side of the same strip.

The following lemma characterizes special, regular, and singular leaves of canonical foliations of striped surfaces with types \typeInternal-\typeSpec.
In particular, it shows that the difference between singular and special leaves of the canonical foliation constitute leaves of type $\typeOneSideA$.
The proof is straightforward and we leave it for the reader.

\begin{lemma}\label{lm:relations_between_defs}
Let $\qmap: \bigsqcup \limits_{\stInd \in \StInd} \strip_{\stInd} \to \stripSurf$ be a striped atlas and $\Partition$ be a canonical foliation on $\stripSurf$.
Then the following statements hold.
\begin{enumerate}[leftmargin=*,label={\rm(\arabic*)}]
\item\label{enum:lm:relations_between_defs:special}
A leaf $\leaf\in\Partition$ is \myemph{special}, that is $\leaf\not=\hcl{\leaf}$, see Definition~\ref{def:special_leaf}, if and only if $\leaf$ is of one of the types \typeBdMany, \typeOneSideB, or \typeSpec.
\item\label{enum:lm:relations_between_defs:not_c31_c32}
The following conditions for a leaf $\leaf\in\Partition$ are equivalent:
\begin{enumerate}[leftmargin=*,label={\rm(\roman*)}, itemsep=1ex]
\item\label{enum:lm:relations_between_defs:not_c31_c32:cross_sect}
$\leaf\in\Partition$ \myemph{admits a cross section};
\item\label{enum:lm:relations_between_defs:not_c31_c32:satur_nbh}
$\leaf\in\Partition$ has an open saturated neighbourhood foliated homeomorphic with $\bR\times\Vsp$, where $\Vsp$ is an open subset of $[0,1)$;
\item\label{enum:lm:relations_between_defs:not_c31_c32:not_c31_c32}
$\leaf$ is not of types \typeOneSideA\ and \typeOneSideB.
\end{enumerate}

\item\label{enum:lm:relations_between_defs:regular}
A leaf $\leaf\in\Partition$ is \myemph{regular}, see Definition~\ref{def:regular_leaf}, if and only if $\leaf$ is of one of the types \typeInternal, \typeBdOne, \typeCycle, or \typeReduce.

\item\label{enum:lm:relations_between_defs:singular}
Correspondingly, a leaf $\leaf\in\Partition$ is \myemph{singular} if and only if $\leaf$ is of one of the types \typeBdMany, \typeOneSideA, \typeOneSideB, or \typeSpec.
\end{enumerate}
\end{lemma}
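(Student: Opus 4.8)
The plan is to establish all four assertions at once by running through the types \typeInternal--\typeSpec\ one at a time and, for each, reading off the local structure of $\stripSurf$ near a leaf $\leaf$ of that type directly from the description of $\stripSurf$ as a quotient of strips. The useful device is a \emph{minimal} open saturated neighbourhood of $\leaf$: if $\leaf$ is not glued it meets a single strip $\strip_{\stInd}$ with $\Int\strip_{\stInd}=\bR\times(u,v)$, and one sets
\[
\Usp_0\ =\ \qmap\bigl(\bR\times(u,u+\delta)\bigr)\cup\leaf
\]
for small $\delta>0$ (a half-band above $\leaf$ when $\leaf\subset\partial\stripSurf$, or the two-sided band $\qmap(\bR\times(t_0-\delta,t_0+\delta))$ when $\leaf=\qmap(\bR\times\{t_0\})$ is interior); when $\leaf$ is glued (type \typeGlued) one takes the union of two such half-bands, one from each strip carrying $\bdX_\bdGlueInd$ and $\bdY_\bdGlueInd$, together with $\leaf$. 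Using injectivity of $\qmap$ off $\mathcal X\cup\mathcal Y$ one checks that $\Usp_0$ is open and saturated and that every open saturated neighbourhood of $\leaf$ contains some such $\Usp_0$; the foliated homeomorphism type of $\Usp_0$ then governs regularity, specialness, and the existence of cross sections.

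For the types \typeInternal, \typeBdOne, \typeCycle, \typeReduce\ the set $\Usp_0$ is, respectively, a band inside one strip, a half-band above one boundary interval, and two half-bands glued along a common whole side; in each case one writes down an explicit foliated homeomorphism of the pair $(\Cl{\Usp_0},\Usp_0)$ with $(\bR\times[-1,1],\bR\times(-1,1))$ or with $(\bR\times[0,1],\bR\times[0,1))$ carrying $\leaf$ to $\bR\times 0$. The only mildly delicate point is that when the relevant boundary interval is a proper sub-interval of $\bR\times\{u\}$ (so that $\leaf$ is a bounded open interval) the ``identity'' map is discontinuous along $\leaf$ and one must interpolate, e.g.\ by a map of the shape $(x,t)\mapsto\bigl((1-t)\phi(x)+tx,\,t\bigr)$ with $\phi$ a homeomorphism of $\bR$ onto that interval --- such a map is proper, hence a homeomorphism. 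This shows the leaves of these types are regular, hence non-special by Lemma~\ref{lm:props_of_regular_leaves}, and, since $\Usp_0$ is foliated homeomorphic to $\bR\times\Vsp$ with $\Vsp$ an interval, that they admit a cross section (a vertical segment).

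For the special types \typeBdMany, \typeOneSideB, \typeSpec\ the combinatorics forces a ``leftover'' boundary interval $I$ lying on the same side of the same strip as $\bdX_\bdGlueInd$: it is another component of $\partial_{\sign}\strip_{\stInd}$ in case \typeBdMany, a component of $\partial_{\sign}\strip_{\stInd}\setminus(\bdX_\bdGlueInd\cup\bdY_\bdGlueInd)$ in case \typeOneSideB, and in case \typeSpec\ at least one of $\bdX_\bdGlueInd,\bdY_\bdGlueInd$ is a proper subset of its side. Since the band $\qmap(\bR\times(u,u+\delta))$ accumulates, as $t\to u$, onto $\qmap(I\times\{u\})$ as well as onto $\leaf$, the closure of every open saturated neighbourhood of $\leaf$ contains the leaf $\qmap(I\times\{u\})$, which is distinct from $\leaf$; hence $\hcl{\leaf}\supsetneq\leaf$, so $\leaf$ is special, therefore singular, therefore not regular. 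These leaves still admit a cross section, because the very same $\Usp_0$ as above --- which does not meet $I$ --- is again foliated homeomorphic to $\bR\times\Vsp$ by the interpolation trick. Together, the last two paragraphs settle parts~\ref{enum:lm:relations_between_defs:special},~\ref{enum:lm:relations_between_defs:regular} and~\ref{enum:lm:relations_between_defs:singular} for every type other than \typeOneSideA, together with the implications \ref{enum:lm:relations_between_defs:not_c31_c32:not_c31_c32}$\Rightarrow$\ref{enum:lm:relations_between_defs:not_c31_c32:satur_nbh}$\Rightarrow$\ref{enum:lm:relations_between_defs:not_c31_c32:cross_sect} of part~\ref{enum:lm:relations_between_defs:not_c31_c32}.

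The main obstacle is type \typeOneSideA, which is singular but \emph{not} special, so that neither of the previous arguments applies. First, $\hcl{\leaf}=\leaf$: it is enough to show that a sequence $\qmap(x_n,t_n)$ with $(x_n,t_n)\in\strip_{\stInd}$, $t_n\to u$ and $|x_n|\to\infty$ has no limit in $\stripSurf$, and this holds because every point of $\stripSurf$ has a neighbourhood whose $\qmap$-preimage is bounded in $\preStripSurf$ --- so a convergent such sequence would be bounded in $\strip_{\stInd}$, contradicting $|x_n|\to\infty$. Granting this, $\Cl{\Usp_0}=\Usp_0\cup\qmap(\bR\times\{u+\delta\})$, and intersecting over $\delta\to 0$ yields $\hcl{\leaf}=\leaf$. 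Second, because type \typeOneSideA\ glues two \emph{distinct} boundary intervals lying on the \emph{same} side of the \emph{same} strip, whose union is that whole side, the set $\Usp_0$ is obtained from the contractible half-band $\strip_{\stInd}\cap(\bR\times[u,u+\delta))$ by identifying its two boundary intervals with one another; hence $\Usp_0$ is an open annulus or an open M\"obius band, and in particular not simply connected. If some open saturated $\Usp$ containing $\leaf$ were foliated homeomorphic to $\bR\times\Vsp$ with $\Vsp$ an interval, then the connected open saturated subset $\Usp_0\subset\Usp$ would have the form $\bR\times W$ with $W$ an interval, hence be simply connected --- a contradiction; so $\leaf$ has no $\bR\times\Vsp$-neighbourhood and is not regular. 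Finally, the same description of $\Usp_0$ shows that in a disc about any point of $\leaf$ both local sides of $\leaf$ contain arcs of the \emph{same} leaves, so the transverse coordinate along any curve meeting $\leaf$ attains a local extremum there and $\pr\circ\gamma$ cannot be locally injective, i.e.\ $\leaf$ admits no cross section; the identical argument disposes of type \typeOneSideB. This yields \ref{enum:lm:relations_between_defs:not_c31_c32:cross_sect}$\Rightarrow$\ref{enum:lm:relations_between_defs:not_c31_c32:not_c31_c32}, completing part~\ref{enum:lm:relations_between_defs:not_c31_c32}, and --- since \typeInternal--\typeSpec\ exhaust all leaves --- finishes parts~\ref{enum:lm:relations_between_defs:special},~\ref{enum:lm:relations_between_defs:regular} and~\ref{enum:lm:relations_between_defs:singular}.
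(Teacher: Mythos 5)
The paper itself gives no argument for this lemma (it is explicitly ``left for the reader''), and your case-by-case analysis of the types \typeInternal--\typeSpec\ via the minimal saturated neighbourhoods $\Usp_0$ is exactly the intended straightforward proof; I checked the details and they are correct. In particular you correctly isolate the only two non-routine points --- that type \typeOneSideA\ is non-special yet non-regular (settled by your closure computation and the annulus/M\"obius-band $\pi_1$ obstruction), and that types \typeOneSideA, \typeOneSideB\ admit no cross section because both local sides of $\leaf$ meet the same leaves --- and the auxiliary facts you invoke (every open saturated neighbourhood of $\leaf$ contains some $\Usp_0$; the interpolation map $(x,t)\mapsto\bigl((1-t)\phi(x)+tx,t\bigr)$ is proper, hence a foliated homeomorphism) all hold as stated.
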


\begin{definition}\label{def:reduced_atlas}
An atlas $\qmap$ is called \myemph{reduced} if it does not contain leaves of types \typeCycle\ and \typeReduce.
\end{definition}

Denote $D = \qmap\bigl(\bigsqcup \limits_{\stInd \in \StInd} \partial\strip_{\stInd}\bigr)$.
Let also $\specLeaves$ be the family of all special leaves of $\Partition$ and $\singLeaves$ be the family of all singular leaves.

\begin{corollary}\label{cor:families_of_leaves}
{\rm c.f.~\cite[Lemma~7.2]{MaksymenkoPolulyakhSoroka:PICG:2017}}
The families $\specLeaves$, $\singLeaves$, $\partial\stripSurf$, and $D$ are locally finite, $\specLeaves \subset \singLeaves$, and $\partial \stripSurf \cup \specLeaves \subset D$.
Moreover, the atlas $\qmap$ is reduced if and only if $\partial \stripSurf \cup \specLeaves = D$.
\end{corollary}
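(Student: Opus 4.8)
The plan is to deduce every assertion except local finiteness directly from the classification of leaves given by Lemma~\ref{lm:relations_between_defs}, and to prove that $D$ is locally finite by an explicit neighbourhood construction exploiting the rigidity of a striped atlas. Since $\partial\stripSurf$, $\specLeaves$, $\singLeaves$ are subfamilies of $D$, local finiteness of $D$ yields it for all four families at once.

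First I would record the following dictionary, reading off Lemmas~\ref{lm:props_of_regular_leaves} and~\ref{lm:relations_between_defs}. A leaf of type \typeInternal\ meets $D$ in no point, because an interior point of a strip lies on no boundary interval and is a point of injectivity of $\qmap$; on the other hand, every leaf of one of the types \typeBdOne, \typeBdMany, \typeCycle, \typeReduce, \typeOneSideA, \typeOneSideB, \typeSpec\ is by construction the $\qmap$-image of a boundary interval of $\preStripSurf$. Hence $D$ is exactly the union of the leaves of these types, while $\partial\stripSurf$ is the union of the leaves of types \typeBdOne, \typeBdMany, the family $\specLeaves$ is the union of the leaves of types \typeBdMany, \typeOneSideB, \typeSpec, and $\singLeaves$ is the union of the leaves of types \typeBdMany, \typeOneSideA, \typeOneSideB, \typeSpec. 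From this the inclusions $\specLeaves\subset\singLeaves\subset D$ and $\partial\stripSurf\cup\specLeaves\subset D$ are immediate; moreover $D\setminus(\partial\stripSurf\cup\specLeaves)$ is precisely the set of leaves of types \typeCycle, \typeReduce, \typeOneSideA, so $\partial\stripSurf\cup\specLeaves=D$ holds if and only if $\qmap$ has no leaves of these types, which is the condition defining a reduced atlas (Definition~\ref{def:reduced_atlas}).

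For local finiteness of $D$ I would use two elementary observations. (i) Every fibre $\qmap^{-1}(z)$ has at most two points: $\qmap$ is injective off $\mathcal X\cup\mathcal Y$, and since the intervals in $\mathcal X\cup\mathcal Y$ are mutually distinct, a point lying on some $\bdX_{\bdGlueInd}$ (resp.\ $\bdY_{\bdGlueInd}$) is $\qmap$-equivalent only to the corresponding point of $\bdY_{\bdGlueInd}$ (resp.\ $\bdX_{\bdGlueInd}$) under the gluing homeomorphism $\phi_{\bdGlueInd}$ of~\eqref{equ:gluing_map}. (ii) In a strip $\strip_{\stInd}$, which is open in $\bR\times[u,v]$, every point $w$ has a relatively open neighbourhood $B_w$ that is a box when $w\in\Int\strip_{\stInd}$ and a half-open box $(x-\eps,x+\eps)\times[u,u+\eps)$ (or around $v$) when $w\in\partial\strip_{\stInd}$; since $\partial\strip_{\stInd}$ is open, hence locally connected, in $\bR\times\{u,v\}$, one may take $B_w$ to meet at most one boundary interval of $\strip_{\stInd}$. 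Now, given $z\in\stripSurf$, I would list $\qmap^{-1}(z)=\{w_1,\dots,w_k\}$ with $k\le 2$ and $w_i\in\strip_{\stInd_i}$, choose neighbourhoods $B_i=B_{w_i}$ as in (ii), pairwise disjoint inside each strip, and — when $z$ is a glued leaf with $w_1\in\bdX_{\bdGlueInd}$, $w_2\in\bdY_{\bdGlueInd}$ — shrink $B_1,B_2$ so that $\phi_{\bdGlueInd}(B_2\cap\bdY_{\bdGlueInd})=B_1\cap\bdX_{\bdGlueInd}$. Setting $\Usp:=\qmap(B_1\cup\dots\cup B_k)$, facts (i)–(ii) together with the compatibility of the $B_i$ give $\qmap^{-1}(\Usp)=B_1\cup\dots\cup B_k$, so $\Usp$ is an open neighbourhood of $z$; and if a leaf $\leaf\in D$ meets $\Usp$, pick $z'\in\leaf\cap\Usp$ and $w'\in B_i$ with $\qmap(w')=z'$, so the strip-leaf of $\strip_{\stInd_i}$ through $w'$ maps onto $\leaf$ and, $\leaf$ not being of type \typeInternal, is a boundary interval of $\strip_{\stInd_i}$, necessarily the unique one met by $B_i$. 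Thus $\leaf$ is determined by the index $i$, and at most $k\le 2$ leaves of $D$ meet $\Usp$.

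I expect the main obstacle to be precisely this last construction: proving that $\Usp=\qmap(B_1\cup\dots\cup B_k)$ is open, i.e.\ that $\qmap^{-1}(\Usp)$ picks up no further points in strips other than $\strip_{\stInd_1},\strip_{\stInd_2}$. This is where the bound $|\qmap^{-1}(z)|\le 2$ and the careful, $\phi_{\bdGlueInd}$-compatible choice of $B_1$ and $B_2$ along a shared glued boundary interval are essential; once $\Usp$ is known to be open, counting the leaves of $D$ meeting it is routine, and the remaining assertions of the corollary are pure bookkeeping with the list of leaf types from Lemma~\ref{lm:relations_between_defs}.
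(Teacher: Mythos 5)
The paper does not actually print a proof of this corollary (it is deferred to \cite[Lemma~7.2]{MaksymenkoPolulyakhSoroka:PICG:2017}), so I can only assess your argument on its own terms. Your type-by-type dictionary for $\specLeaves$, $\singLeaves$, $\partial\stripSurf$ and $D$ is correct, the inclusions $\specLeaves\subset\singLeaves$ and $\partial\stripSurf\cup\specLeaves\subset D$ do follow from it, and your local-finiteness construction of the open neighbourhood $\qmap(B_1\cup\dots\cup B_k)$ meeting at most two leaves of $D$ is sound in outline. One point you should make explicit there: the bound $|\qmap^{-1}(z)|\le 2$ is not a formal consequence of the listed axioms of a striped atlas (nothing written forbids, say, $\qmap(\bdX_{\gamma_1})=\qmap(\bdX_{\gamma_2})$ for $\gamma_1\ne\gamma_2$); it follows from the standing assumption that $\stripSurf$ is a $2$-manifold, since gluing three or more boundary intervals together would create non-manifold points.

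The genuine problem is the last step of your proof of the ``moreover'' clause. Your own computation shows that $D\setminus(\partial\stripSurf\cup\specLeaves)$ is the union of the leaves of types \typeCycle, \typeReduce\ \emph{and} \typeOneSideA, so that $\partial\stripSurf\cup\specLeaves=D$ holds iff there are no leaves of these three types. You then assert that this ``is the condition defining a reduced atlas'', but Definition~\ref{def:reduced_atlas} forbids only types \typeCycle\ and \typeReduce. A leaf of type \typeOneSideA\ is glued (hence lies in $D\setminus\partial\stripSurf$), is singular but \emph{not} special by Lemma~\ref{lm:relations_between_defs}, and is perfectly compatible with a reduced atlas; for an atlas with such a leaf and no leaves of types \typeCycle, \typeReduce\ one has $\partial\stripSurf\cup\specLeaves\subsetneq D$ even though the atlas is reduced, so the biconditional fails as you argue it. In fact the printed statement suffers from exactly the $\specLeaves$-versus-$\singLeaves$ confusion that Remark~\ref{rem:discussion_of_spec_leaves} warns about: the intended reading is $D=\partial\stripSurf\cup\singLeaves$, and with $\singLeaves$ your dictionary gives $D\setminus(\partial\stripSurf\cup\singLeaves)=$ the union of the leaves of types \typeCycle\ and \typeReduce, which matches Definition~\ref{def:reduced_atlas} exactly. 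The fix is a one-line change in your bookkeeping, but you should flag the discrepancy with the statement as printed rather than silently misquote the definition to make the two sides agree.
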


\begin{remark}\label{rem:discussion_of_spec_leaves}\rm
In~\cite[Theorem~3.7]{MaksymenkoPolulyakh:PGC:2015} the authors proved the following statements.
\begin{enumerate}[leftmargin=*, label=(\arabic*)]
\item
If a leaf $\leaf\in\Partition$ is of type \typeCycle, then $\qmap(\strip_{\stInd})$ is a connected component of $\stripSurf$ foliated homeomorphic either with the standard cylinder or the standard M\"obius band.

\item\label{enum:reducing_strip}
If $\leaf$ is of type \typeReduce, then one can change the striped atlas $\qmap$ replacing strips $\strip_{\stInd}$ and $\strip_{\stInd'}$ with one strip obtained by gluing $\strip_{\stInd}$ and $\strip_{\stInd'}$ along $\bdX$ and $\bdY$.
This reduces the total number of strips in $\qmap$.

\item
It then follows from~\ref{enum:reducing_strip} that if $\stripSurf$ is connected and distinct from the standard cylinder and M\"obius band, then every striped atlas consisting of at most countably many strips can be replaced with a \myemph{reduced} one, i.e.\! having no leaves of types \typeCycle\ and \typeReduce.
In other words, in a reduced atlas each leaf of type \typeGlued\ is in fact of one of the types \typeOneSideA, \typeOneSideB, \typeSpec.
\end{enumerate}

However, the types \typeOneSideA, \typeOneSideB, \typeSpec\ were not distinguished in~\cite{MaksymenkoPolulyakh:PGC:2015} and a leaf having either of those types was called \myemph{special}.

On the other hand, if $(\stripSurf, \Partition)$ is a striped surface, then, due to \ref{enum:lm:relations_between_defs:special} of Lemma~\ref{lm:relations_between_defs}, a leaf $\leaf$ is \textit{special in the sense of Definition~\ref{def:special_leaf}} iff $\leaf$ is of one of the types \typeBdMany, \typeOneSideB, or \typeSpec.

Such an ambiguity led to an \textbf{incorrect formulation} of the definition of a reduced atlas saying that \textit{a striped atlas is called \textbf{reduced} whenever $D=\partial\stripSurf \cup \specLeaves$}, see a sentence before~\cite[Theorem~3.7]{MaksymenkoPolulyakhSoroka:PICG:2017}.
It \textbf{must be read} as Definition~\ref{def:reduced_atlas} or equivalently as
Corollary~\ref{cor:families_of_leaves}, that is $D=\partial\stripSurf \cup \singLeaves$.
Then~\cite[Theorems 7.3 \& 8.1]{MaksymenkoPolulyakhSoroka:PICG:2017} remain true.
\end{remark}

\begin{lemma}\label{lm:rel_between_leaf_types}
Suppose each leaf of $\Partition$ admits a cross section, so it satisfies each of the equivalent  conditions~\ref{enum:th:charact_stripedsurf_old:eq:loc_triv_fibr}-\ref{enum:th:charact_stripedsurf_old:eq:cross_sect} of Theorem~\ref{th:cross_sections}.
If the atlas $\qmap$ is reduced, then for a leaf $\leaf \subset\Int{\stripSurf}$ the following conditions are equivalent:
\begin{enumerate}[label={\rm(\roman*)}, topsep=0pt]
\item\label{enum:cor:rel_between_leaf_types:c33}
$\leaf$ is of type \typeSpec;
\item\label{enum:cor:rel_between_leaf_types:spec}
$\leaf$ is special, i.e. $\leaf\not=\hcl{\leaf}$;
\item\label{enum:cor:rel_between_leaf_types:sing}
$\leaf$ is singular;
\item\label{enum:cor:rel_between_leaf_types:c}
$\leaf$ is of type \typeGlued, so $\leaf = \qmap(\bdX_\bdGlueInd) = \qmap(\bdY_\bdGlueInd)$ for some $\bdGlueInd\in\BdGlueInd$.
\end{enumerate}
If, in addition, each singular leaf is contained in $\partial\stripSurf$, then there is no leaves of type \typeGlued, whence  $\qmap$ is a homeomorphism, and so $\stripSurf$ is a disjoint union of strips.
\end{lemma}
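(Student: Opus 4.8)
The plan is to split the proof into two parts: first establishing the cycle of equivalences \ref{enum:cor:rel_between_leaf_types:c33}--\ref{enum:cor:rel_between_leaf_types:c} for an interior leaf, and then deducing the ``disjoint union of strips'' conclusion from the extra hypothesis. For the equivalences, the key observation is that the hypothesis ``each leaf admits a cross section'' is, by Lemma~\ref{lm:relations_between_defs}\ref{enum:lm:relations_between_defs:not_c31_c32}, exactly the statement that no leaf of $\Partition$ is of type \typeOneSideA\ or \typeOneSideB; and the hypothesis that $\qmap$ is reduced means, by Definition~\ref{def:reduced_atlas}, that no leaf is of type \typeCycle\ or \typeReduce. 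So under both hypotheses together, the only surviving types are \typeInternal, \typeBdOne, \typeBdMany, and \typeSpec. Since $\leaf\subset\Int{\stripSurf}$, it cannot be of type \typeBd\ (hence not \typeBdOne\ or \typeBdMany), because leaves of type \typeBd\ lie in $\partial\stripSurf$. Therefore an interior leaf is either of type \typeInternal\ or of type \typeSpec, and the latter is a subcase of type \typeGlued; this immediately gives \ref{enum:cor:rel_between_leaf_types:c33}$\Leftrightarrow$\ref{enum:cor:rel_between_leaf_types:c}. For \ref{enum:cor:rel_between_leaf_types:c33}$\Leftrightarrow$\ref{enum:cor:rel_between_leaf_types:spec} I would invoke Lemma~\ref{lm:relations_between_defs}\ref{enum:lm:relations_between_defs:special}: a leaf is special iff it is of type \typeBdMany, \typeOneSideB, or \typeSpec, and for an interior leaf under our hypotheses the first two are excluded, leaving type \typeSpec. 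Similarly \ref{enum:cor:rel_between_leaf_types:c33}$\Leftrightarrow$\ref{enum:cor:rel_between_leaf_types:sing} follows from Lemma~\ref{lm:relations_between_defs}\ref{enum:lm:relations_between_defs:singular}: a leaf is singular iff it is of one of the types \typeBdMany, \typeOneSideA, \typeOneSideB, \typeSpec, and again only \typeSpec\ remains for an interior leaf here.

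For the final assertion, suppose in addition that every singular leaf lies in $\partial\stripSurf$. Then no interior leaf is singular, so by the equivalence \ref{enum:cor:rel_between_leaf_types:sing}$\Leftrightarrow$\ref{enum:cor:rel_between_leaf_types:c} just proved, no interior leaf is of type \typeGlued. On the other hand, a leaf of type \typeGlued\ is by definition $\qmap(\bdX_\bdGlueInd)=\qmap(\bdY_\bdGlueInd)$ for some $\bdGlueInd\in\BdGlueInd$; in a reduced atlas such a leaf cannot be of type \typeCycle\ or \typeReduce, so it is of one of the types \typeOneSideA, \typeOneSideB, or \typeSpec. But \typeOneSideA\ and \typeOneSideB\ are excluded by the cross-section hypothesis, so any glued leaf is of type \typeSpec, and a leaf of type \typeSpec\ is singular by Lemma~\ref{lm:relations_between_defs}\ref{enum:lm:relations_between_defs:singular}, hence lies in $\partial\stripSurf$. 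The point to check is that a leaf of type \typeSpec\ is in fact an interior leaf, contradicting its membership in $\partial\stripSurf$: indeed a leaf of type \typeSpec\ is glued from boundary intervals $\bdX_\bdGlueInd,\bdY_\bdGlueInd$ of the strips, and after gluing two boundary intervals the resulting leaf becomes interior in $\stripSurf$ (a neighbourhood picture in $\stripSurf$ around such a leaf looks like two half-strips glued along their boundary edges, so the leaf acquires collar neighbourhoods on both sides). Hence no leaf of type \typeSpec\ exists either, so there is no leaf of type \typeGlued\ at all, i.e. $\BdGlueInd=\emptyset$.

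With $\BdGlueInd=\emptyset$ the families $\mathcal{X}$ and $\mathcal{Y}$ from the definition of a striped atlas are empty, so condition (2)(a) says that $\qmap$ is injective on all of $\preStripSurf$; being also a continuous surjective quotient map, $\qmap:\preStripSurf\to\stripSurf$ is then a homeomorphism. Consequently $\stripSurf$ is homeomorphic, as a foliated surface, to the disjoint union $\bigsqcup_{\stInd\in\StInd}\strip_{\stInd}$ of strips with their canonical foliations, which is the claimed conclusion. The main obstacle, and the only step requiring a genuine (though short) argument rather than bookkeeping of leaf types, is the verification that a leaf of type \typeSpec\ lies in the interior of $\stripSurf$; everything else is an application of the classification in Lemma~\ref{lm:relations_between_defs} together with the two standing hypotheses reinterpreted as exclusions of leaf types.
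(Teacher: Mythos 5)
Your proposal is correct and takes essentially the same route as the paper: both reduce all four conditions to the type classification of Lemma~\ref{lm:relations_between_defs} (with the cross-section hypothesis excluding types \typeOneSideA, \typeOneSideB\ and reducedness excluding \typeCycle, \typeReduce), and both derive the final assertion from the fact that leaves of type \typeGlued\ lie in $\Int{\stripSurf}$. The only difference is cosmetic: you justify that last fact with the two-half-strips gluing picture, whereas the paper simply cites ``the construction of the atlas''.
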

\begin{proof}
The implications
\ref{enum:cor:rel_between_leaf_types:c33}$\Rightarrow$%
\ref{enum:cor:rel_between_leaf_types:spec}$\Rightarrow$%
\ref{enum:cor:rel_between_leaf_types:sing} directly follow from
\ref{enum:lm:relations_between_defs:special} and \ref{enum:lm:relations_between_defs:singular} of Lemma~\ref{lm:relations_between_defs}.

\ref{enum:cor:rel_between_leaf_types:sing}$\Rightarrow$\ref{enum:cor:rel_between_leaf_types:c}.
If $\leaf$ is a singular leaf contained in $\Int{\stripSurf}$, then by \ref{enum:lm:relations_between_defs:singular} of Lemma~\ref{lm:relations_between_defs} $\leaf$ is of one of the types \typeOneSideA, \typeOneSideB, or \typeSpec, and so it is of type \typeGlued.

\ref{enum:cor:rel_between_leaf_types:c}$\Rightarrow$\ref{enum:cor:rel_between_leaf_types:c33}.
Suppose a leaf $\leaf$ is of type \typeGlued.
Since the atlas is reduced, $\leaf$ is not of types \typeCycle\ and \typeReduce.
Moreover, as $\leaf$ admits a cross section, if follows from~\ref{enum:lm:relations_between_defs:not_c31_c32:cross_sect} of Lemma~\ref{lm:relations_between_defs} that $\leaf$ is not of types \typeOneSideA\ and \typeOneSideB\ as well.
Hence $\leaf$ is of type \typeSpec.

For the last statement notice that by the construction of the atlas each leaf of type \typeGlued\ is contained in $\Int{\stripSurf}$.
Moreover, the equivalence \ref{enum:cor:rel_between_leaf_types:sing}$\Leftrightarrow$\ref{enum:cor:rel_between_leaf_types:c} implies that every such leaf must also be singular.
Hence if each singular leaf $\leaf$ of $\Partition$ is contained in $\partial\stripSurf$, then $\qmap$ will have no leaves of type \typeGlued.
In other words, no strips are glued via $\qmap$, whence $\qmap$ is a homeomorphism and $\stripSurf$ is a disjoint union of strips.
\end{proof}

\section{Cutting foliated surface along isolated leaves}\label{sect:cutting_along_isol_leaves}

\begin{definition}\label{def:isolated_leaf}
Let $(\Mman,\Partition)$ be a foliated surface.
A leaf $\leaf$ is called \myemph{isolated}, if for each $z\in\leaf$ there exists a foliated chart that contains this point and intersects $\leaf$ by an arc.
In other words, there exist an open neighbourhood $W$ of $z$ and an imbedding $\phi:(-1,1)\times(-1,1)\to\Mman$ such that
\begin{itemize}[leftmargin=2em]
\item $\phi\bigl((-1,1)\times(-1,1)\bigr) = W$,
\item $\phi^{-1}(\leaf) = (-1,1)\times 0$,
\item $\phi(-1,1) \times t$ is contained in some leaf of $\Partition$ for each $t\in(-1,1)$.
\end{itemize}
\end{definition}

\begin{theorem}\label{th:cutting}
Let $(\Mman,\Partition)$ be a foliated surface and $\singLeaves \subset\Int{\Mman}$ be a locally finite family of isolated leaves.
Then there exists a foliated surface $(\tMman, \tPartition)$ and a continuous map $p:\tMman\to\Mman$ having the following properties.
\begin{enumerate}[leftmargin=*, label={\rm(\arabic*)}]
\item\label{enum:th:cutting:p_is_quotient}
$p$ is a quotient map, so a subset $A\subset \Mman$ is open if and only if $p^{-1}(A)$ is open in $\tMman$;
\item\label{enum:th:cutting:p_M_Sigma_homeo}
the restriction $p:\tMman\setminus p^{-1}(\singLeaves) \to \Mman\setminus\singLeaves$ is a foliated homeomorphism;
\item\label{enum:th:cutting:p_inv_Sigma_2to1}
for each leaf $\leaf \in \singLeaves$ the inverse image $p^{-1}(\leaf)$ consists of two leaves $\tleaf_1, \tleaf_2\subset\partial\tMman$ of $\tPartition$ such that $p|_{\tleaf_i}: \tleaf_i \to \leaf$, $i=1,2$, is a homeomorphism.
\end{enumerate}
\end{theorem}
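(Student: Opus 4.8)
The plan is to carry out the classical ``cut a surface along a properly embedded line'' construction simultaneously along every leaf of $\singLeaves$, leaving a neighbourhood of each such leaf as the only place where anything happens. Thus $\tMman$ will be $\Mman\setminus\singLeaves$ with, for each $\leaf\in\singLeaves$, two copies of $\leaf$ attached back as boundary leaves, one to each side of $\leaf$, and $p$ will be the map that collapses these two copies back onto $\leaf$ and is the identity elsewhere. The only step that requires genuine work is the first one — normalizing the foliation near each leaf of $\singLeaves$ so that it becomes an honest product; after that the construction is an explicit gluing and the verification is point-set bookkeeping.

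\emph{Step 1 (product neighbourhoods).} First I would show that for every $\leaf\in\singLeaves$ there is an open neighbourhood $N_{\leaf}\subset\Int\Mman$ and a foliated homeomorphism $\psi_{\leaf}\colon N_{\leaf}\to\bR\times(-1,1)$, the target carrying the horizontal foliation, with $\psi_{\leaf}(\leaf)=\bR\times 0$. This is obtained by patching the local product charts supplied by Definition~\ref{def:isolated_leaf}: exhaust $\leaf\cong\bR$ by compact subarcs, trivialize over each subarc using finitely many charts, and glue the local trivializations inductively along $\leaf$. Because $\leaf$ is contractible there is no monodromy obstruction to this gluing; in particular the two local sides of $\leaf$ are coherently distinguished all along $\leaf$, so that $N_{\leaf}\setminus\leaf$ has exactly two connected components (there is no ``one-sided'' phenomenon, as could happen for a circle leaf). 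Finally, since $\singLeaves$ is a locally finite family of pairwise disjoint closed subsets of a manifold, I would shrink the $N_{\leaf}$ so as to make them pairwise disjoint.

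\emph{Step 2 (the cut surface).} Put $\Mman^{\circ}=\Mman\setminus\singLeaves$, an open subsurface, and let $A_{\leaf}=N_{\leaf}\setminus\leaf\subset\Mman^{\circ}$, identified by $\psi_{\leaf}$ with $\bR\times\bigl((-1,0)\sqcup(0,1)\bigr)$. Let $B_{\leaf}=\bigl(\bR\times(-1,0]\bigr)\sqcup\bigl(\bR\times[0,1)\bigr)$ be a disjoint union of two half-open strips, with the open embedding $j_{\leaf}\colon A_{\leaf}\hookrightarrow B_{\leaf}$ given by $\psi_{\leaf}$ followed by the tautological inclusion. Define
\[
\tMman \;:=\; \Bigl(\,\Mman^{\circ}\;\sqcup\;\bigsqcup_{\leaf\in\singLeaves}B_{\leaf}\,\Bigr)\Big/\bigl(a\sim j_{\leaf}(a):\ \leaf\in\singLeaves,\ a\in A_{\leaf}\bigr),
\]
and let $p\colon\tMman\to\Mman$ be the continuous map induced by the inclusion $\Mman^{\circ}\hookrightarrow\Mman$ together with the ``folding'' maps $B_{\leaf}\to N_{\leaf}$, $(x,t)\mapsto\psi_{\leaf}^{-1}(x,t)$ (the argument read inside $\bR\times(-1,1)$). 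Since the $A_{\leaf}$ are disjoint open subsets of $\Mman^{\circ}$, this gluing produces a topological surface in which the two lines $\bR\times 0$ sitting in $B_{\leaf}$ become two disjoint arcs $\tleaf_{1},\tleaf_{2}\subset\partial\tMman$, near which $\tMman$ is modelled on $\bR\times[0,1)$; Hausdorffness is checked by cases (two boundary points of $\tMman$ lying over the same leaf of $\singLeaves$ are separated inside the corresponding $B_{\leaf}$, all other cases reduce to Hausdorffness of $\Mman$ and disjointness of the $N_{\leaf}$). Equip $\tMman$ with the foliation $\tPartition$ that equals $\Partition$ on $\Mman^{\circ}$ and is horizontal on each $B_{\leaf}$; these agree over the overlaps, and every component of $\partial\tMman$ — either a component of $\partial\Mman$ (which is disjoint from $\singLeaves$ and hence untouched) or one of the new arcs $\tleaf_{i}$ — is a leaf, so $(\tMman,\tPartition)$ is a foliated surface.

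\emph{Step 3 (the three properties).} For~\ref{enum:th:cutting:p_M_Sigma_homeo}, note that $\tMman\setminus p^{-1}(\singLeaves)$ is exactly the inserted copy of $\Mman^{\circ}$, on which $p$ is a foliated homeomorphism onto $\Mman\setminus\singLeaves$. For~\ref{enum:th:cutting:p_inv_Sigma_2to1}, by construction $p^{-1}(\leaf)=\tleaf_{1}\sqcup\tleaf_{2}$ with $\tleaf_{1},\tleaf_{2}\subset\partial\tMman$ leaves of $\tPartition$, and $p|_{\tleaf_{i}}$ is $\psi_{\leaf}^{-1}$ restricted to $\bR\times 0$, a homeomorphism onto $\leaf$. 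For~\ref{enum:th:cutting:p_is_quotient}, the sets $\Mman^{\circ}$ and $N_{\leaf}$ form an open cover of $\Mman$; over $\Mman^{\circ}$ the map $p$ is a homeomorphism, and over $N_{\leaf}$ it is the fold map $\bigl(\bR\times(-1,0]\bigr)\sqcup\bigl(\bR\times[0,1)\bigr)\to\bR\times(-1,1)$, which is a closed (indeed proper) map and therefore a quotient map. Hence if $p^{-1}(A)$ is open in $\tMman$, then $A\cap\Mman^{\circ}$ and each $A\cap N_{\leaf}$ are open in $\Mman$, and as these cover $A$, so is $A$; together with continuity and surjectivity of $p$ this gives~\ref{enum:th:cutting:p_is_quotient}. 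As indicated above, the only substantive point is Step~1 — turning the pointwise ``isolated'' hypothesis into one product neighbourhood of $\leaf$ — with everything else reducing to explicit gluing and routine topology.
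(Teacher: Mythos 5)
Your construction reaches the right object, but it takes a genuinely different route from the paper, and the difference matters. The paper never builds a global product neighbourhood of a singular leaf: it forms the disjoint union of $\Mman\setminus\singLeaves$ with \emph{all} the local half-charts $\Uz{-},\Uz{+}$ taken over \emph{every} point $z$ of every leaf of $\singLeaves$, and then quotients by an explicitly described partition whose classes encode ``same point of $\Mman$, same local side of the singular leaf''. That construction is purely local — it uses nothing beyond Definition~\ref{def:isolated_leaf} and the local finiteness of $\singLeaves$ — and the two boundary copies of a leaf emerge automatically from the two families $\Vz{-},\Vz{+}$ of half-charts. Your version instead front-loads everything into Step~1: a foliated tubular-neighbourhood lemma asserting that each $\leaf\in\singLeaves$ has an open neighbourhood on which the induced foliation is a product $\bR\times(-1,1)$. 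Granting Step~1, your Steps~2--3 are fine and in fact verify properties (1)--(3) more explicitly than the paper does (the paper leaves that verification to the reader).

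The concern is that Step~1 is a substantive lemma of roughly the same weight as the theorem itself, and your sketch (``exhaust by compact subarcs, trivialize, glue inductively; no monodromy since $\leaf$ is contractible'') elides the points where the work actually lies: arranging a locally finite chain of charts along $\leaf$ so that non-consecutive charts do not meet near $\leaf$ (this uses that $\leaf$ is a \emph{closed} subset of $\Mman$), shrinking so that consecutive overlaps are connected product blocks, and absorbing the transition homeomorphisms, which a priori only permute plaques rather than preserve the vertical coordinate. Note also that Definition~\ref{def:isolated_leaf} by itself does not force $\leaf$ to be a properly embedded line (an isolated leaf could be a circle, or a non-closed line), in which case Step~1 as stated can fail or produce a one-sided phenomenon; your argument therefore silently imports the paper's standing hypotheses that leaves are homeomorphic to $\bR$ and closed. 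Under those hypotheses I believe Step~1 is true and your proof goes through, but as written it replaces the theorem's only hard point with an unproved lemma, whereas the paper's partition construction avoids needing that lemma at all.
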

\begin{proof}
Put $\Usp = \Mman\setminus\singLeaves$ and let $j:\Usp\subset\Mman$ be the inclusion map.
For each leaf $\leaf\in\singLeaves$ and each point $z\in\leaf$ let also $\Uz{} = (-1,1)\times(-1,1)$,
\begin{align*}
\Uz{-} &= (-1,1)\times(-1,0], &
\Uz{+} &= (-1,1)\times[0,1),
\end{align*}
and $\phi_z: \Uz{} \to \Mman$ be an embedding guaranteed by Definition~\ref{def:isolated_leaf}, so
\begin{itemize}
\item $\phi_z(\Uz{})$ is open in $\Mman$,
\item $\phi_z^{-1}(\leaf) = (-1,1)\times 0$,
\item $\phi_z(-1,1) \times t$ is contained in some leaf of $\Partition$.
\end{itemize}
Since $\singLeaves$ is locally finite, one can additionally assume that
\begin{itemize}
\item $\phi_z^{-1}(\singLeaves) = \phi_z^{-1}(\leaf) = (-1,1)\times 0$.
\end{itemize}
Let
\[
\Nsp \ = \ \Usp \ \bigsqcup \
 \mathop{\sqcup}\limits_{\substack{ z\in\leaf\in\singLeaves, \\ \sign=\pm}} \Uz{\sign}
\]
be the disjoint union of $\Usp$ with sets $\Uz{-}$ and $\Uz{+}$ over all leaves $\leaf\in\singLeaves$ and $z\in\leaf$.
Then we have a natural map $\hp:\Nsp\to\Mman$ defined by
\begin{align*}
\hp|_{\Usp} &= j:\Usp\,\subset\,\Mman, \\
\hp|_{\Uz{\sign}}& = \phi_z|_{\Uz{\sign}} = \phi_{z, \sign}: \Uz{\sign} \to \Mman, \ (z\in\singLeaves, \sign=\pm).
\end{align*}

\begin{sublemma}\label{lm:hp_is_quotient}
The map $\hp$ is a quotient map.
\end{sublemma}
\begin{proof}
Since $\hp$ is continuous and surjective and $\hp|_{\Usp}$ and $\hp|_{\Uz{\sign}}$ are embeddings, one should only check that for a subset $A\subset \Mman$ the following conditions are equivalent:
\begin{enumerate}[label=(\alph*)]
\item\label{enum:lm:hp_is_open:A_open}
$A$ is open;
\item\label{enum:lm:hp_is_open:A_U}
$A \cap \hp(\Usp)$ is open in $\hp(\Usp)$, and $A \cap \phi_{z, \sign}(\Uz{\sign})$ is open in $\phi_{z, \sign}(\Uz{\sign})$ for all $z\in\singLeaves$ and $\sign=\pm$.
\end{enumerate}

The implication \ref{enum:lm:hp_is_open:A_open}$\Rightarrow$\ref{enum:lm:hp_is_open:A_U} is evident.

\ref{enum:lm:hp_is_open:A_U}$\Rightarrow$\ref{enum:lm:hp_is_open:A_open}
Evidently, the sets $\phi_{z, -}(\Uz{-})$ and $\phi_{z, +}(\Uz{+})$ form a finite closed cover of $\phi_z(\Uz{})$ for an arbitrary $z \in \leaf \in \singLeaves$. Therefore, if both intersections $A \cap \phi_{z, -}(\Uz{-})$ and $A \cap \phi_{z, +}(\Uz{+})$ are open respectively in $\phi_{z, -}(\Uz{-})$ and $\phi_{z, +}(\Uz{+})$, then $A \cap \phi_z(\Uz{})$ is open in $\Uz{}$.

Since the sets $\hp(\Usp)$ and $\phi_z(\Uz{})$, $z \in \leaf \in \singLeaves$, are open in $\Mman$, and so are their intersections with $A$, it follows that $A$ is open in $\Mman$ as well.
%
%
\end{proof}

We will now represent $\hp$ as a composition of two continuous maps
\begin{equation}\label{equ:hp_p_q}
\hp = p \circ q: \Nsp \xrightarrow{~~q~~} \tMman \xrightarrow{~~p~~} \Mman,
\end{equation}
where $p$ will satisfy the statement of Theorem~\ref{th:cutting}.

For every $z\in\singLeaves$ let
\[
\Vz{} = \{ \Upt{x}{\sign} \mid z \in \phi_x(\Upt{x}{\sign}), \ x \in \singLeaves, \ \sign=\pm \}
\]
be the family of all $\Upt{x}{\sign}$ whose image in $\Mman$ contains $z$.
Notice that for each $\Upt{x}{\sign} \in \Vz{}$ there exists $\epsilon>0$ such that exactly one of the following two conditions holds:
\begin{align*}
&\text{either}& &\phi_x^{-1}\Bigl(\phi_z\bigl( 0\times [0,\epsilon] \bigr)\Bigr) \subset \Upt{x}{\sign}, &
&\text{or} & \phi_x^{-1}\Bigl(\phi_z\bigl( 0\times [-\epsilon,0] \bigr)\Bigr) \subset \Upt{x}{\sign}.
\end{align*}
Hence $\Vz{}$ is a disjoint union of two subfamilies, see Fig.~\ref{fig:V_z}:
\begin{align*}
\Vz{-} &= \left\{ \Upt{x}{\sign} \in \Vz{} \mid \phi_x^{-1}\Bigl(\phi_z\bigl( 0\times [-\epsilon,0] \bigr)\Bigr) \subset \Upt{x}{\sign} \ \text{for some} \ \epsilon>0 \right\}, \\
\Vz{+} &= \left\{ \Upt{x}{\sign} \in \Vz{} \mid \phi_x^{-1}\Bigl(\phi_z\bigl( 0\times [0,\epsilon] \bigr)\Bigr) \subset \Upt{x}{\sign} \ \text{for some} \ \epsilon>0 \right\}.
\end{align*}
Moreover, we get the following partition of $\Nsp$:
\[
\NFol = \{ \Fpt{x} \}_{x\in\Usp} \ \bigsqcup \
\{ \Fpti{x}{\sign} \}_{x\in\singLeaves, \sign=\pm},
\]
where
\begin{align*}
\Fpt{x}&= j^{-1}(x) \bigcup \mathop\cup\limits_{z\in\singLeaves, \sign=\pm} \phi_{z,\sign}^{-1}(x), &
&(x\in\Usp), \\
\Fpti{x}{\sign} &= \{ \phi_{z, \nu}^{-1}(x) \mid \Upt{z}{\nu} \in \Vpt{x}{\sign},\ z \in \singLeaves,\ \nu = \pm \}, &
&(x\in\singLeaves, \ \sign=\pm).
\end{align*}
\graphicspath{{pictures/}}
\begin{figure}[ht]
\includegraphics[height=4cm]{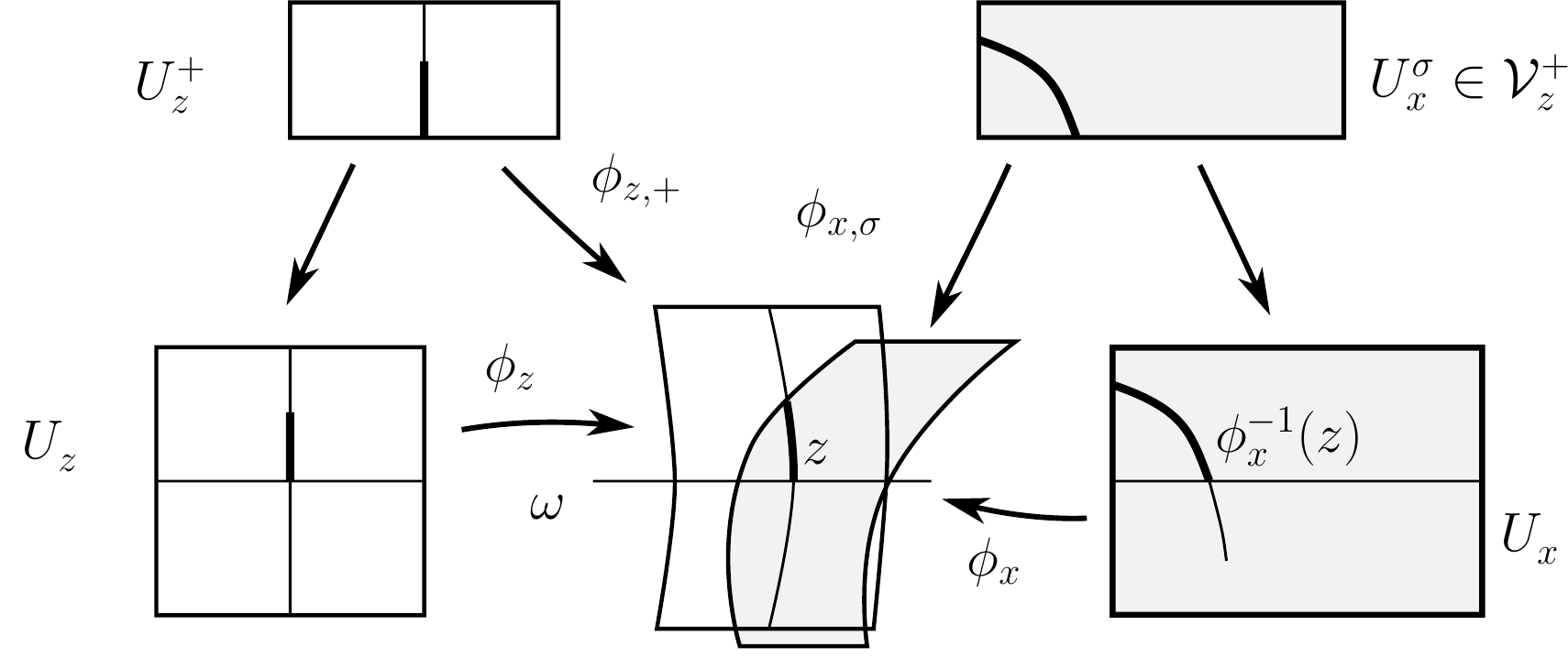}
\caption{}
\label{fig:V_z}
\end{figure}

Let $\tMman = \Nsp/\NFol$ be the set of elements of $\NFol$ and $q:\Nsp\to\tMman$ be the quotient map.
Endow $\tMman$ with the corresponding quotient topology: so a subset $A\subset \tMman$ is open if and only if its inverse $q^{-1}(A)$ is open in $\Nsp$.
It easily follows that $q$ is an open map.

Notice that
\begin{align*}
\hp(\Fpt{x}) &= x, \ (x\in\Usp), &
\hp(\Fpti{x}{\sign}) &= x, \ (x\in\singLeaves, \ \sign=\pm),
\end{align*}
whence $\hp$ induces a map $p:\tMman\to\Mman$ giving the required decomposition $\hp = p\circ q$, see~\eqref{equ:hp_p_q}.

Verification of properties~\ref{enum:th:cutting:p_is_quotient}-\ref{enum:th:cutting:p_inv_Sigma_2to1} is left for the reader.
\end{proof}

\section{Main results}\label{sect:main_results}
In this section we will assume that \textit{$(\stripSurf,\Partition)$ is a foliated surface with countable base and such that each leaf of $\Partition$ is homeomorphic to $\bR$ and is a closed subset of $\stripSurf$}.
Let also $\specLeaves\subset\singLeaves$ be the families of all special and singular leaves of $\Partition$ respectively.

The following statement characterizes striped surfaces without leaves of types \typeOneSideA\ and \typeOneSideB.
\begin{theorem}\label{th:charact_stripedsurf_old}
{\rm\cite[Theorem~1.8]{MaksymenkoPolulyakh:MFAT:2016},
c.f.\! also~\cite[Theorem~7.4]{MaksymenkoPolulyakhSoroka:PICG:2017}.}
The following conditions are equivalent:
\begin{enumerate}[leftmargin=*, label={\rm(\arabic*)}, itemsep=1ex]
\item\label{enum:th:charact_stripedsurf_old:str_atlas}
$(\stripSurf,\Partition)$ admits a striped atlas without leaves of types \typeOneSideA\ and \typeOneSideB;

\item\label{enum:th:charact_stripedsurf_old:locfin}
$\specLeaves$ is a locally finite family and $\Partition$ satisfies each of the equivalent  conditions~\ref{enum:th:charact_stripedsurf_old:eq:loc_triv_fibr}-\ref{enum:th:charact_stripedsurf_old:eq:cross_sect} of Theorem~\ref{th:cross_sections}.
\end{enumerate}
\end{theorem}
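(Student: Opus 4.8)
The plan is to prove the two implications separately, using the machinery already set up in the preliminaries. For the implication \ref{enum:th:charact_stripedsurf_old:str_atlas}$\Rightarrow$\ref{enum:th:charact_stripedsurf_old:locfin}, assume $(\stripSurf,\Partition)$ carries a striped atlas $\qmap$ with no leaves of types \typeOneSideA\ and \typeOneSideB. Local finiteness of $\specLeaves$ is immediate from Corollary~\ref{cor:families_of_leaves}. To get the fibration condition, I would verify condition~\ref{enum:th:charact_stripedsurf_old:eq:cross_sect} of Theorem~\ref{th:cross_sections}, namely that each leaf of $\Partition$ admits a cross section: by~\ref{enum:lm:relations_between_defs:not_c31_c32} of Lemma~\ref{lm:relations_between_defs}, a leaf admits a cross section if and only if it is not of types \typeOneSideA\ or \typeOneSideB, and by hypothesis $\qmap$ has no such leaves. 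Then Theorem~\ref{th:cross_sections} (whose hypothesis on local finiteness of $\specLeaves$ we have just checked, the remaining hypotheses being part of the standing assumptions of the section) yields the equivalence with~\ref{enum:th:charact_stripedsurf_old:eq:loc_triv_fibr} and~\ref{enum:th:charact_stripedsurf_old:eq:satur_open_nbh}, completing this direction.

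For the converse \ref{enum:th:charact_stripedsurf_old:locfin}$\Rightarrow$\ref{enum:th:charact_stripedsurf_old:str_atlas}, this is the substantive direction and presumably where the argument of \cite[Theorem~1.8]{MaksymenkoPolulyakh:MFAT:2016} is invoked, possibly in an extended form (the excerpt mentions Theorem~\ref{th:charact_stripedsurf_old_ext}). The strategy I would follow: using that $\Partition$ satisfies condition~\ref{enum:th:charact_stripedsurf_old:eq:satur_open_nbh} of Theorem~\ref{th:cross_sections}, cover $\stripSurf$ by open saturated sets each foliated-homeomorphic to $\bR\times\Vsp$ with $\Vsp\subset[0,1)$ open. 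Since $\specLeaves$ is locally finite and $\stripSurf$ has countable base, these pieces and the special leaves can be organized into a countable locally finite pattern. The idea is then to ``cut'' $\stripSurf$ along a suitable locally finite family of leaves — including the special leaves and enough separating leaves — so that each resulting piece is an open subset of $\bR\times[u,v]$ containing $\bR\times(u,v)$, i.e.\ a strip in the sense of the paper, and to record the gluings along boundary intervals to produce the quotient map $\qmap$. One must check that the cutting produces only finitely many pieces near each point (from local finiteness), that the gluing maps satisfy conditions (a)--(c) of the striped atlas definition, and crucially that no leaf of type \typeOneSideA\ or \typeOneSideB\ is created — this is exactly what the cross-section condition prevents, since a leaf of type \typeOneSideA\ or \typeOneSideB\ fails to admit a cross section by Lemma~\ref{lm:relations_between_defs}, contradicting~\ref{enum:th:charact_stripedsurf_old:eq:cross_sect}.

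The main obstacle, and the part deserving the most care, is the construction of the cutting family and the verification that the quotient $\qmap:\bigsqcup S_\lambda\to\stripSurf$ it induces is genuinely a quotient map with the required injectivity/embedding properties — in particular handling special leaves, which become boundary leaves that are glued to themselves or to each other along sub-intervals (types \typeBdMany, \typeSpec). Controlling the behaviour near a special leaf $\leaf$ with $\leaf\neq\hcl\leaf$, and ensuring the two ``sides'' obtained after cutting are correctly identified with boundary intervals of strips, is the technical heart. Since the excerpt explicitly defers this to Theorem~\ref{th:charact_stripedsurf_old_ext} and \cite[Theorem~1.8]{MaksymenkoPolulyakh:MFAT:2016}, I expect the proof here to be short: reduce \ref{enum:th:charact_stripedsurf_old:locfin} to the hypotheses of that earlier theorem (local finiteness of $\specLeaves$ plus the fibration condition) and quote it, then translate its conclusion into the language of a striped atlas avoiding types \typeOneSideA\ and \typeOneSideB\ via Lemma~\ref{lm:relations_between_defs}.
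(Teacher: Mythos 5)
Your proposal matches the paper's proof: the direction \ref{enum:th:charact_stripedsurf_old:str_atlas}$\Rightarrow$\ref{enum:th:charact_stripedsurf_old:locfin} is handled exactly as you describe, via Corollary~\ref{cor:families_of_leaves} for local finiteness of $\specLeaves$ and part~\ref{enum:lm:relations_between_defs:not_c31_c32} of Lemma~\ref{lm:relations_between_defs} to obtain cross sections, and the converse is simply quoted from \cite[Theorem~1.8]{MaksymenkoPolulyakh:MFAT:2016}, as you anticipated. (One small correction to your framing: Theorem~\ref{th:charact_stripedsurf_old_ext} is a later extension that \emph{uses} this theorem, not an ingredient of its proof.)
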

\begin{proof}
\ref{enum:th:charact_stripedsurf_old:str_atlas}$\Rightarrow$\ref{enum:th:charact_stripedsurf_old:locfin}.
By Corollary~\ref{cor:families_of_leaves} $\specLeaves$ is locally finite.
Also as $\qmap$ has no leaves of types \typeOneSideA\ and \typeOneSideB, we get from \ref{enum:lm:relations_between_defs:not_c31_c32} of Lemma~\ref{lm:relations_between_defs} that each leaf of $\Partition$ admits a cross section, i.e.\! condition~\ref{enum:th:charact_stripedsurf_old:eq:cross_sect} of Theorem~\ref{th:cross_sections} holds.

The implication \ref{enum:th:charact_stripedsurf_old:locfin}$\Rightarrow$\ref{enum:th:charact_stripedsurf_old:str_atlas}
is established in~\cite[Theorem~1.8]{MaksymenkoPolulyakh:MFAT:2016}.
\end{proof}

The following extension of Theorem~\ref{th:charact_stripedsurf_old} allows to check the existence of cross sections only for leaves in the interior of $\stripSurf$.

\begin{theorem}\label{th:charact_stripedsurf_old_ext}
Suppose that $\singLeaves$ is locally finite, and each leaf of $\Partition$ contained in $\Int{\stripSurf}$ admits a cross section.
Then each leaf in $\partial\stripSurf$ also admits a cross section.
Hence by Theorem~\ref{th:charact_stripedsurf_old}  $(\stripSurf,\Partition)$ is a striped surface.
\end{theorem}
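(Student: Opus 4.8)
The plan is to reduce the boundary case to the interior case by the cutting construction of Theorem~\ref{th:cutting}, applied to an auxiliary collection of leaves. Let $\leaf_0 \subset \partial\stripSurf$ be a leaf for which we want to produce a cross section. By Theorem~\ref{th:charact_stripedsurf_old} it suffices to check that $\Partition$ satisfies condition~\ref{enum:th:charact_stripedsurf_old:eq:cross_sect} of Theorem~\ref{th:cross_sections} for \emph{all} leaves, since we are already assuming $\singLeaves$ (hence also $\specLeaves$, by Lemma~\ref{lm:props_of_regular_leaves}) is locally finite. The difficulty is that the cutting theorem as stated requires the family of leaves being cut to consist of \emph{isolated} leaves lying in $\Int{\stripSurf}$, whereas $\leaf_0$ is a boundary leaf. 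So the first step is to pass to the double of $\stripSurf$ along $\partial\stripSurf$: form $\widehat{\stripSurf} = \stripSurf \cup_{\partial\stripSurf} \stripSurf$ with the natural foliation $\widehat{\Partition}$, in which every boundary leaf $\leaf \subset \partial\stripSurf$ becomes an interior leaf $\hleaf$. One must check that $\widehat{\Partition}$ is again a one-dimensional foliation whose leaves are closed copies of $\bR$, and that the doubled singular leaves $\widehat{\singLeaves}$ form a locally finite family of \emph{isolated} leaves in $\Int{\widehat{\stripSurf}}$. The key observation making this work is part~\ref{enum:lm:relations_between_defs:not_c31_c32} of Lemma~\ref{lm:relations_between_defs}: a leaf admits a cross section iff it has a saturated neighbourhood foliated homeomorphic with $\bR\times\Vsp$ for $\Vsp$ open in $[0,1)$; in particular, at each point of an interior leaf admitting a cross section one has exactly the foliated chart required by Definition~\ref{def:isolated_leaf}, so such leaves are isolated.

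The second step is to analyze what happens to the leaves after cutting $\widehat{\stripSurf}$ along $\widehat{\singLeaves}$ via Theorem~\ref{th:cutting}, obtaining $p:\widetilde{\stripSurf}\to\widehat{\stripSurf}$. By property~\ref{enum:th:cutting:p_inv_Sigma_2to1}, each singular leaf $\hleaf$ pulls back to two boundary leaves $\tleaf_1,\tleaf_2 \subset \partial\widetilde{\stripSurf}$, and by~\ref{enum:th:cutting:p_M_Sigma_homeo} everything away from $p^{-1}(\widehat{\singLeaves})$ is carried homeomorphically. Now $\widetilde{\stripSurf}$ has the property that \emph{every} singular leaf lies in $\partial\widetilde{\stripSurf}$: indeed, a leaf in $\Int{\widetilde{\stripSurf}}$ maps by $p$ homeomorphically onto an interior non-singular leaf of $\widehat{\stripSurf}$, and one checks that it inherits a foliated product neighbourhood, hence is regular. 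Thus, by the last statement of Lemma~\ref{lm:rel_between_leaf_types} — after first noting that every leaf of $\widetilde{\Partition}$ admits a cross section, which follows because interior leaves do (they are regular) and boundary leaves $\tleaf_i$ do too, being ``half'' of the interior product chart around $\hleaf$ — we find that $\widetilde{\stripSurf}$ is a disjoint union of strips. Alternatively, and perhaps more cleanly, we apply Theorem~\ref{th:charact_stripedsurf_old} directly to $(\widetilde{\stripSurf},\widetilde{\Partition})$: its singular, hence special, family is locally finite (being carried from that of $\widehat{\stripSurf}$), and all its leaves admit cross sections, so it is a striped surface with no leaves of types \typeOneSideA, \typeOneSideB.

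The third step is to transport a cross section of $\tleaf_1$ (or $\tleaf_2$) back down through $p$ and the doubling map to produce the desired cross section of $\leaf_0$ in the original $\stripSurf$. Since $\tleaf_i$ lies in $\partial\widetilde{\stripSurf}$ and $\widetilde{\stripSurf}$ is (a union of) strips, $\tleaf_i$ is a boundary interval of a strip and certainly admits a cross section $\gamma:[0,b)\to\widetilde{\stripSurf}$ with $\gamma(0)\in\tleaf_i$. Composing with $p$ gives a map into $\widehat{\stripSurf}$, and by genericity of the chart we may assume $\gamma((0,b))$ lies in the copy of $\Int{\stripSurf}\subset\widehat{\stripSurf}$ corresponding to the side from which $\tleaf_i$ came; this descends to a map $[0,b)\to\stripSurf$ hitting $\leaf_0$ at the endpoint and meeting each leaf of $\Partition$ at most once, i.e. a cross section of $\leaf_0$.

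The main obstacle I expect is the purely point-set bookkeeping around the double: verifying that $(\widehat{\stripSurf},\widehat{\Partition})$ is genuinely a foliated surface in the sense of the paper (each boundary component of $\widehat{\stripSurf}$ a leaf — but after doubling there may be no boundary, which is fine — and each leaf a closed copy of $\bR$), that the doubled singular family is locally finite and consists of isolated leaves so that Theorem~\ref{th:cutting} applies, and above all that ``singular'' behaves correctly under both the doubling and the cutting. Concretely, one must confirm that a boundary leaf $\leaf$ of $\stripSurf$ is regular iff its double $\hleaf$ is regular in $\widehat{\stripSurf}$: the model $(\bR\times[0,1],\bR\times[0,1))$ doubles to $(\bR\times[-1,1],\bR\times(-1,1))$, matching Definition~\ref{def:regular_leaf}, but one should also rule out that doubling \emph{creates} a singularity at a previously regular boundary leaf, or \emph{destroys} one — this amounts to checking that the gluing homeomorphism along $\partial\stripSurf$ respects the foliated product structure, which it does because it is the identity. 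Once this is in place, the rest is an application of the already-established machinery. I would also remark that an alternative to doubling is to argue directly: cut $\stripSurf$ only along the \emph{interior} singular leaves (which are isolated by hypothesis), reducing to the case where all singular leaves lie in $\partial\stripSurf$, and then invoke the last sentence of Lemma~\ref{lm:rel_between_leaf_types} — but that lemma requires the atlas to already exist and be reduced, which is what we are trying to prove, so the doubling route is cleaner.
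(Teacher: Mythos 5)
There is a genuine gap, and it sits exactly where the real work of the theorem lies. Your reduction (double $\stripSurf$, cut the doubled surface along its singular leaves via Theorem~\ref{th:cutting}, conclude the result is a union of strips, push a cross section back down) hinges on the claim that every boundary leaf $\tleaf_i$ of the cut surface $\widetilde{\stripSurf}$ admits a cross section. Your justification --- that $\tleaf_i$ is ``half of the interior product chart around $\hleaf$'' --- does not work: $\hleaf$ is by construction a \emph{singular} leaf of the doubled foliation, so by Definition~\ref{def:regular_leaf} it has \emph{no} saturated product neighbourhood; it is merely isolated, which only gives local foliated charts and says nothing about whether a transversal arc meets each leaf at most once globally. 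Your fallback routes are circular: Theorem~\ref{th:strip_charact} and the ``disjoint union of strips'' conclusion of Lemma~\ref{lm:rel_between_leaf_types} both presuppose a (reduced) striped atlas, and in the paper's logical order Theorem~\ref{th:strip_charact} is itself deduced from the very theorem you are proving; likewise, applying Theorem~\ref{th:charact_stripedsurf_old} to $\widetilde{\stripSurf}$ requires knowing that \emph{all} of its leaves, including the boundary leaves $\tleaf_i$, admit cross sections --- which is precisely the original problem transported to a new surface. So after the doubling and cutting you are facing the same question you started with: why does a boundary leaf, in the presence of a locally finite singular family, admit a cross section?

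The paper's proof supplies exactly the missing idea, and it is not a formal reduction but an honest argument by contradiction. If a boundary leaf $\leaf$ had no cross section, then every transversal arc $\delta$ emanating from $\leaf$ would return arbitrarily close to $\leaf$ to a leaf it has already met; restricting $\delta$ to such a returning segment $[a_{\eps},b_{\eps}]$ gives a local cross section in $\Int{\stripSurf}$ with endpoints on the same leaf. Lemma~\ref{lm:cross_section_meets_a_singular_leaf} (the key lemma, which your proposal never invokes) then forces either $\Int{\stripSurf}$ to be a cylinder or M\"obius band --- ruled out by continuity as $\eps\to 0$ --- or the segment to meet a singular leaf. Letting $\eps\to 0$ produces singular leaves accumulating at $\leaf$, contradicting local finiteness of $\singLeaves$. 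Without an argument of this kind (or some other independent proof that the $\tleaf_i$ admit cross sections), your proposal does not close.
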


We will prove it in \S\ref{sect:proof:th:charact_stripedsurf_old_ext}.
As a consequence of Theorem~\ref{th:charact_stripedsurf_old_ext} we get the following characterization of strips.

\begin{theorem}\label{th:strip_charact}
Suppose $\stripSurf$ is connected, the family $\singLeaves$ is locally finite, and $\singLeaves\subset \partial\stripSurf$.
Then $\stripSurf$ is foliated homeomorphic either to a standard cylinder or to a standard M\"obius band or to a strip.
\end{theorem}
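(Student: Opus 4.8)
The plan is to deduce the statement directly from Theorem~\ref{th:charact_stripedsurf_old_ext}, the last assertion of Lemma~\ref{lm:rel_between_leaf_types}, and part~(3) of Remark~\ref{rem:discussion_of_spec_leaves}; no new geometry is needed, only bookkeeping with the notions of regular, singular and special leaves.

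First I would check that every leaf $\leaf\subset\Int{\stripSurf}$ admits a cross section. Indeed, since $\singLeaves\subset\partial\stripSurf$, no interior leaf is singular, so such a $\leaf$ is regular: it has a saturated open neighbourhood $\Usp$ together with a foliated homeomorphism $\phi:\Usp\to\bR\times(-1,1)$ carrying $\leaf$ onto $\bR\times 0$. As $\Usp$ is saturated, any $\Partition$-leaf meeting $\Usp$ lies entirely in $\Usp$, hence, being connected, is a leaf of $\Partition_{\Usp}$ and is taken by $\phi$ onto a single horizontal line $\bR\times\{s\}$. Therefore the curve $\gamma(t)=\phi^{-1}(0,t)$, $t\in(-1,1)$, meets each $\Partition$-leaf at most once, so $\pr\circ\gamma$ is injective and $\gamma$ is a cross section in the sense of Definition~\ref{def:local_crossections} (its boundary condition being vacuous since $J=(-1,1)$).

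Having this, and using the hypothesis that $\singLeaves$ is locally finite, Theorem~\ref{th:charact_stripedsurf_old_ext} applies and yields that $(\stripSurf,\Partition)$ is a striped surface \emph{and} that every leaf of $\Partition$ — in $\Int{\stripSurf}$ or in $\partial\stripSurf$ — admits a cross section. If $\stripSurf$ is foliated homeomorphic to the standard cylinder or to the standard M\"obius band, we are done. Otherwise $\stripSurf$ is connected and distinct from these two surfaces, so by Remark~\ref{rem:discussion_of_spec_leaves}(3) I may fix a \emph{reduced} striped atlas $\qmap:\bigsqcup_{\stInd\in\StInd}\strip_{\stInd}\to\stripSurf$ (necessarily with at most countably many strips). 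Now all hypotheses of the final assertion of Lemma~\ref{lm:rel_between_leaf_types} are satisfied: each leaf of $\Partition$ admits a cross section, $\qmap$ is reduced, and every singular leaf lies in $\partial\stripSurf$. Hence $\qmap$ has no leaves of type~\typeGlued, so $\qmap$ is a homeomorphism, $\stripSurf$ is a disjoint union of strips, and, being connected, is a single strip.

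The substance of the argument is entirely absorbed into Theorem~\ref{th:charact_stripedsurf_old_ext}, the extension promoting the cross-section property from interior leaves to all leaves; its proof is postponed to \S\ref{sect:proof:th:charact_stripedsurf_old_ext}, and that is where the real difficulty lies. In the present proof the only points needing a little care are the strict (not merely local) injectivity of $\pr\circ\gamma$ in the first step, which is exactly where saturatedness of $\Usp$ is used, and invoking the reduction to a reduced atlas only after the cylinder and M\"obius band have been separated off.
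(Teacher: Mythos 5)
Your proposal is correct and follows essentially the same route as the paper's proof: after separating off the standard cylinder and M\"obius band, observe that interior leaves are regular and hence admit cross sections, invoke Theorem~\ref{th:charact_stripedsurf_old_ext} to obtain a (reduced) striped atlas, and conclude from the last assertion of Lemma~\ref{lm:rel_between_leaf_types} that $\stripSurf$ is a disjoint union of strips, hence a single strip by connectedness. The only difference is that you spell out two steps the paper leaves implicit, namely the explicit cross section through a regular interior leaf and the passage to a reduced atlas via Remark~\ref{rem:discussion_of_spec_leaves}.
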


\begin{proof}
Suppose $(\stripSurf,\Partition)$ is neither a standard cylinder nor a standard M\"obius band.
We should show that then it is foliated homeomorphic to a strip.

By Lemma~\ref{lm:props_of_regular_leaves} $\specLeaves \subset \singLeaves$, whence $\specLeaves$ is locally finite as well.
Moreover, as $\specLeaves \subset \singLeaves \subset \partial\stripSurf$, it follows that each leaf in $\Int{\stripSurf}$ is regular, and therefore it admits a cross section.
Hence by Theorem~\ref{th:charact_stripedsurf_old_ext}  $(\stripSurf,\Partition)$ admits a reduced atlas $\qmap$.
As $\singLeaves \subset \partial\stripSurf$, if follows from Lemma~\ref{lm:rel_between_leaf_types} that $\stripSurf$ is a disjoint union of strips.
But $\stripSurf$ is connected, so it is a strip itself.
\end{proof}

The next statement characterizes all striped surfaces.

\begin{theorem}\label{th:charact_stripedsurf}
The following conditions are equivalent:
\begin{enumerate}[label={\rm(\arabic*)}, itemsep=1ex]
\item\label{enum:th:charact_stripedsurf:str_atlas}
$(\stripSurf,\Partition)$ admits a striped atlas;

\item\label{enum:th:charact_stripedsurf:sing_leaf_loc_finite}
the family $\singLeaves$ of all \myemph{singular} leaves is locally finite.
\end{enumerate}
\end{theorem}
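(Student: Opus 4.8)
The plan is to treat the two implications separately. The implication \ref{enum:th:charact_stripedsurf:str_atlas}$\Rightarrow$\ref{enum:th:charact_stripedsurf:sing_leaf_loc_finite} requires nothing new: by Corollary~\ref{cor:families_of_leaves} the family $\singLeaves$ is automatically locally finite as soon as a striped atlas exists. For the converse the plan is a \emph{cut -- apply -- reglue} argument: cut $\stripSurf$ along the interior singular leaves to obtain a foliated surface whose interior leaves are all regular, invoke Theorem~\ref{th:charact_stripedsurf_old_ext} to put a striped atlas on that surface, and then compose with the cutting projection to recover a striped atlas on $\stripSurf$.

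So assume \ref{enum:th:charact_stripedsurf:sing_leaf_loc_finite} and set $\singLeaves^{\circ}=\singLeaves\cap\Int{\stripSurf}$. First I would observe that every leaf $\leaf\subset\Int{\stripSurf}$ is \emph{isolated} in the sense of Definition~\ref{def:isolated_leaf}: since $\leaf$ is homeomorphic to $\bR$ and is a closed subset of $\stripSurf$ it is properly embedded, hence it meets a small enough foliated chart around any of its points in a single plaque. Thus $\singLeaves^{\circ}$ is a locally finite family of isolated leaves contained in $\Int{\stripSurf}$, and Theorem~\ref{th:cutting} produces a foliated surface $(\tMman,\tPartition)$ together with a quotient map $p\colon\tMman\to\stripSurf$ which restricts to a foliated homeomorphism over $\stripSurf\setminus\singLeaves^{\circ}$ and, for every $\leaf\in\singLeaves^{\circ}$, carries the two boundary leaves of $\tPartition$ comprising $p^{-1}(\leaf)$ homeomorphically onto $\leaf$.

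Next I would verify that $(\tMman,\tPartition)$ satisfies the standing hypotheses of \S\ref{sect:main_results} and those of Theorem~\ref{th:charact_stripedsurf_old_ext}. A countable basis of $\tMman$ is obtained from a countable basis of $\stripSurf$ consisting of sets small enough to meet $\singLeaves^{\circ}$ in at most one plaque, by passing to the (at most two) components of their $p$-preimages; and each leaf of $\tPartition$ is homeomorphic to $\bR$ and closed, being either a homeomorphic copy of a leaf of $\Partition$ or one of the new boundary leaves. A leaf $\tilde\leaf\subset\Int{\tMman}$ is disjoint from $p^{-1}(\singLeaves^{\circ})$, so $p$ maps some saturated neighbourhood of $\tilde\leaf$ foliated-homeomorphically onto a saturated neighbourhood of $p(\tilde\leaf)$, which is a regular leaf lying in $\Int{\stripSurf}\setminus\singLeaves^{\circ}$; hence $\tilde\leaf$ is regular and, being interior, admits a cross section. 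Finally $p$ maps every singular leaf of $\tPartition$ onto a singular leaf of $\Partition$ and is at most two-to-one over each leaf of $\Partition$, so local finiteness of $\singLeaves$ together with continuity of $p$ forces $\singLeaves(\tMman)$ to be locally finite. By Theorem~\ref{th:charact_stripedsurf_old_ext} the surface $(\tMman,\tPartition)$ is therefore striped; fix a striped atlas $\tqmap\colon\bigsqcup_{\stInd\in\StInd}\strip_{\stInd}\to\tMman$ with gluing families $\mathcal{X}_{0},\mathcal{Y}_{0}$.

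It remains to recognise $\qmap:=p\circ\tqmap$ as a striped atlas of $\stripSurf$ with canonical foliation $\Partition$. Being a composition of quotient maps it is a quotient map, and it carries the canonical foliations of the strips first to $\tPartition$ and then to $\Partition$. For the gluing data, each of the two new boundary leaves of $\tMman$ produced by cutting a given $\leaf\in\singLeaves^{\circ}$ is of type~\typeBd\ for the atlas $\tqmap$, and in either subcase \typeBdOne\ or \typeBdMany\ it is the $\tqmap$-image of exactly one whole boundary interval of $\bigsqcup_{\stInd\in\StInd}\strip_{\stInd}$, onto which $\tqmap$ restricts to a homeomorphism. Adjoining these pairs of boundary intervals (indexed by $\singLeaves^{\circ}$) to $\mathcal{X}_{0}$ and $\mathcal{Y}_{0}$, one member of each pair to each family, gives families $\mathcal{X},\mathcal{Y}$ for which: $\mathcal{X},\mathcal{Y}$ are disjoint families of mutually distinct boundary intervals; $\qmap(\bdX_{\bdGlueInd})=\qmap(\bdY_{\bdGlueInd})$ holds on the new indices because $p$ identifies the two preimages of each $\leaf\in\singLeaves^{\circ}$ and on the old indices it is inherited from $\tqmap$; the restriction of $\qmap$ to any interval of $\mathcal{X}\cup\mathcal{Y}$ is an embedding with closed image, being the composite of an embedding coming from $\tqmap$ with a restriction of $p$ that is a homeomorphism onto a leaf of $\Partition$; and $\qmap$ is injective off $\mathcal{X}\cup\mathcal{Y}$, since $\tqmap$ is injective off $\mathcal{X}_{0}\cup\mathcal{Y}_{0}$ and $p$ becomes injective once one leaf of each pair $p^{-1}(\leaf)$, $\leaf\in\singLeaves^{\circ}$, is removed. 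Thus $\qmap$ is a striped atlas, which proves \ref{enum:th:charact_stripedsurf:sing_leaf_loc_finite}$\Rightarrow$\ref{enum:th:charact_stripedsurf:str_atlas}. The conceptual core, and the step demanding the most care, is the regluing: one must check that the additional pasting performed by $p$ is again of striped-atlas form, which hinges on the fact that a boundary leaf of a striped surface, whether of type \typeBdOne\ or \typeBdMany, is always the image of a single whole boundary interval of a strip; the verification that $\tMman$ inherits the standing hypotheses and that $\singLeaves(\tMman)$ is locally finite is routine but should not be skipped, and it is essential to use the extension Theorem~\ref{th:charact_stripedsurf_old_ext} rather than Theorem~\ref{th:charact_stripedsurf_old}, since after cutting one controls cross sections only for the \emph{interior} leaves of $\tMman$.
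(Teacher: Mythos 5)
Your proof is correct and follows essentially the same route as the paper: reduce to interior singular leaves, note they are isolated, cut along them via Theorem~\ref{th:cutting}, transfer local finiteness to $\tMman$, and observe that all singular leaves of $\tPartition$ lie in $\partial\tMman$. The only (cosmetic) difference is the finish: the paper invokes Theorem~\ref{th:strip_charact} to conclude that each component of $\tMman$ is already a strip, so that $p$ itself is the striped atlas, whereas you apply Theorem~\ref{th:charact_stripedsurf_old_ext} to get an atlas $\tqmap$ on $\tMman$ and then verify that $p\circ\tqmap$ is a striped atlas --- extra regluing bookkeeping that the paper's shortcut avoids, though your explicit restriction to $\singLeaves\cap\Int{\stripSurf}$ when applying Theorem~\ref{th:cutting} is actually the more careful reading of its hypotheses.
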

\begin{proof}
The implication \ref{enum:th:charact_stripedsurf:str_atlas}$\Rightarrow$\ref{enum:th:charact_stripedsurf:sing_leaf_loc_finite} is contained in Corollary~\ref{cor:families_of_leaves}.

\ref{enum:th:charact_stripedsurf:sing_leaf_loc_finite}$\Rightarrow$\ref{enum:th:charact_stripedsurf:str_atlas}.
Suppose the family $\singLeaves$ of singular leaves is locally finite.
By assumption each leaf $\leaf$ of $\Partition$ is homeomorphic to $\bR$ and is a closed subset of $\stripSurf$.
Therefore every foliated chart intersects $\leaf$ by a discrete family of arcs.
Hence each singular leaf is isolated, and therefore by Theorem~\ref{th:cutting} there exists a foliated surface $(\tMman, \tPartition)$ and a quotient map $\qmap:\tMman\to\Mman$ such that
\begin{enumerate}[leftmargin=*, label=\alph*), itemsep=1ex]
\item
the restriction $\qmap:\tMman\setminus \qmap^{-1}(\singLeaves) \to \stripSurf\setminus\singLeaves$ is a foliated homeomorphism, and
\item\label{enum:cutting}
for each leaf $\leaf \in \singLeaves$ the inverse image $\qmap^{-1}(\leaf)$ consists of two leaves $\tleaf_1,\tleaf_2\subset\partial\tMman$ of $\tPartition$ such that $\qmap|_{\tleaf_i}: \tleaf_i \to \leaf$, $i=1,2$, is a homeomorphism.
\end{enumerate}

Let $\mathrm{Sing}(\tPartition)$ be the family of all singular leaves of $\tPartition$.
Since $\singLeaves$ is locally finite, and $p$ is two-to-one on $\mathrm{Sing}(\tPartition)$, it follows that $\mathrm{Sing}(\tPartition)$ is locally finite as well.
Moreover, $\mathrm{Sing}(\tPartition)\subset\partial\tMman$, whence by Theorem~\ref{th:strip_charact} every connected component of $\tMman$ is a strip.
Hence $\qmap$ is a striped atlas for $(\stripSurf,\Partition)$.
\end{proof}

\section{Proof of Theorem~\ref{th:charact_stripedsurf_old_ext}}
\label{sect:proof:th:charact_stripedsurf_old_ext}
\begin{lemma}\label{lm:cross_section_meets_a_singular_leaf}
Let $(\stripSurf,\Partition)$ be a connected striped surface with countable base and $\partial\stripSurf= \emptyset$.
Let also $\delta: [0, 1] \to \stripSurf$ be a local cross section such that the points $\delta(0)$ and $\delta(1)$ belong to the same leaf of $\Partition$.
Then exactly one of the following two conditions holds:
\begin{enumerate}[leftmargin=*,label={\rm(\alph*)}]
\item\label{enum:lm:cross_section_meets_a_singular_leaf:cylinder}
either $\stripSurf$ is a standard cylinder or a standard M\"obius band and it coincides with the saturation $\Sat{\delta([0, 1])}$ of the image of $\delta$;
\item\label{enum:lm:cross_section_meets_a_singular_leaf:intersect_sing_leaf}
or $\delta$ intersects a singular leaf $\leaf\in\Partition$.
\end{enumerate}
\end{lemma}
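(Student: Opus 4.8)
The plan is to argue by contradiction: suppose $\delta$ meets no singular leaf, and show that then alternative~\ref{enum:lm:cross_section_meets_a_singular_leaf:cylinder} must hold. Since $\partial\stripSurf=\emptyset$, each leaf in the image of $\delta$ is an interior leaf, and by hypothesis none of them is singular, hence each is regular of type~\typeInternal\ (the only interior regular type among \typeInternal--\typeSpec\ that is not glued). In particular every $\leaf_s := \pr(\delta(s))$ admits a saturated neighbourhood foliated homeomorphic to $\bR\times(-1,1)$. First I would use compactness of $[0,1]$ to cover $\delta([0,1])$ by finitely many such ``product'' saturated neighbourhoods $\Usp_1,\dots,\Usp_n$, chosen so that consecutive ones overlap along the image of $\delta$; a standard chain argument (gluing product charts along their intersections, which are themselves saturated and foliated-homeomorphic to products over intervals) then shows that the saturation $\Sat{\delta([0,1])}$ is an open, connected, saturated subsurface on which the induced foliation $\Partition_{\Sat{\delta([0,1])}}$ has space of leaves homeomorphic to a connected $1$-manifold.

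Next I would identify that $1$-manifold. The map $\pr\circ\delta : [0,1]\to \Sat{\delta([0,1])}/\Partition$ is a local homeomorphism (local cross section, and the target is now Hausdorff since there are no singular, hence no special, leaves in sight) from a compact connected interval into a connected $1$-manifold, with $\pr(\delta(0))=\pr(\delta(1))$. The only connected $1$-manifolds admitting a surjective local homeomorphism from $[0,1]$ with matching endpoints are $[0,1]$ itself (if $\pr\circ\delta$ is globally injective on $[0,1)$, but then the endpoints couldn't coincide unless it's a circle) — so in fact the image must be a circle $S^1$, and $\pr\circ\delta$ must be (up to reparametrisation) the standard covering $[0,1]\to S^1$. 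The key step here is ruling out the degenerate possibility that $\pr\circ\delta$ wraps around more than once or folds back: local injectivity plus connectedness of the leaf space forces the quotient to be exactly $S^1$ and $\delta$ to traverse it exactly once.

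Having established that $\Sat{\delta([0,1])}/\Partition \cong S^1$ with fibre $\bR$ and structure group coming from gluing the two ends of the product chart $\bR\times(-1,1)$, the total space $\Sat{\delta([0,1])}$ is a line bundle over $S^1$ respecting the foliation, hence foliated-homeomorphic to either $\bR\times S^1$ (orientation-preserving gluing, $\ssign=+1$) or the open M\"obius band (orientation-reversing gluing, $\ssign=-1$) — i.e.\ a standard cylinder or standard M\"obius band. Finally, $\Sat{\delta([0,1])}$ is open in $\stripSurf$, and it is also closed: its frontier would have to consist of leaves that are limits of leaves in $\Sat{\delta([0,1])}$ but not contained in it, and since every leaf in $\Sat{\delta([0,1])}$ is type~\typeInternal\ with a full product neighbourhood already inside $\Sat{\delta([0,1])}$, no such frontier leaf can exist. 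Since $\stripSurf$ is connected, $\Sat{\delta([0,1])}=\stripSurf$, giving~\ref{enum:lm:cross_section_meets_a_singular_leaf:cylinder}. Exclusivity of the two alternatives is immediate: a standard cylinder or M\"obius band has no singular leaves at all, so \ref{enum:lm:cross_section_meets_a_singular_leaf:cylinder} and~\ref{enum:lm:cross_section_meets_a_singular_leaf:intersect_sing_leaf} cannot both hold. The main obstacle I anticipate is the bookkeeping in the chain argument — patching the finitely many product saturated neighbourhoods into a single global product-over-$S^1$ structure while keeping track of orientations of the gluings — but this is essentially the classical classification of foliated line bundles over the circle and should go through cleanly given the local structure supplied by regularity of the leaves met by $\delta$.
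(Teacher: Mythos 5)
Your route is genuinely different from the paper's. The paper splits on whether $\stripSurf$ is a standard cylinder or M\"obius band; in the remaining case it invokes the existence of a reduced atlas, observes that with $\partial\stripSurf=\emptyset$ the set $D$ of images of boundary intervals equals $\singLeaves$, and derives a contradiction from strict monotonicity of the second coordinate inside a single strip $\Int{\strip_{\stInd}}=\bR\times(u,v)$. You instead work intrinsically from Definition~\ref{def:regular_leaf}: avoiding singular leaves gives product saturated neighbourhoods along $\delta$, the leaf space of $\Sat{\delta([0,1])}$ is then a compact Hausdorff $1$-manifold, hence $S^1$, and the saturation is a foliated line bundle over $S^1$, i.e.\ a standard cylinder or M\"obius band. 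That part is sound, modulo two harmless inaccuracies: a regular interior leaf need not be of type \typeInternal\ (it can be of type \typeCycle\ or \typeReduce, by Lemma~\ref{lm:relations_between_defs}\ref{enum:lm:relations_between_defs:regular}), though all that is used is the product saturated neighbourhood from the definition of regularity; and $\delta$ need not traverse $S^1$ exactly once --- it may wrap several times, which does not affect the conclusion.

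The genuine gap is the final step, where you claim $\Sat{\delta([0,1])}$ is closed in $\stripSurf$ because every leaf in it ``has a full product neighbourhood already inside $\Sat{\delta([0,1])}$''. The product saturated neighbourhood of a leaf $\leaf'\subset\Sat{\delta([0,1])}$ is a neighbourhood in $\stripSurf$ and need not be contained in the saturation; and even if it were, that would only restate openness, which never precludes a nonempty frontier. To close the gap you must use the hypothesis once more: let $K=\pr(\delta([0,1]))\cong S^1$ and suppose $y\in\overline{K}\setminus K$. By first countability of $\stripSurf/\Partition$ choose $y_n\in K$ with $y_n\to y$; by compactness of $K$ a subsequence converges in $K$ to some $y'$, so that subsequence converges to both $y$ and $y'\neq y$, whence $y\in\hcl{y'}$ and $y'$ is special --- contradicting Lemma~\ref{lm:props_of_regular_leaves}, since every leaf in the saturation is regular. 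With this repair your argument goes through and is arguably more self-contained than the paper's, as it avoids the reduced-atlas theorem of~\cite{MaksymenkoPolulyakh:PGC:2015}.
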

\begin{proof}
Let $\pr:\stripSurf\to\stripSurf/\Partition$ be the quotient map onto the space of leaves.
Then the assumption on $\delta$ mean that the map $\pr\circ\delta:[0,1]\to\stripSurf/\Partition$ is locally injective and satisfies $\pr\circ\delta(0)=\pr\circ\delta(1)$.
Hence $\pr\circ\delta$ induces a continuous map of the circle $\alpha:S^1 = [0,1]/\{0,1\} \to \stripSurf/\Partition$.

\ref{enum:lm:cross_section_meets_a_singular_leaf:cylinder}
Suppose $\stripSurf$ is either a standard cylinder or a standard M\"obius band, so the space of leaves $\stripSurf/\Partition$ is homeomorphic with the unit circle $S^1$.
Hence $\alpha:S^1\to S^1$ is a continuous locally injective map between circles, and so it must be surjective.
The latter means that $\delta$ intersect each leaf of $\Partition$, i.e. $\stripSurf=\Sat{\delta([0,1])}$.

\smallskip

\ref{enum:lm:cross_section_meets_a_singular_leaf:intersect_sing_leaf}
Suppose $\stripSurf$ is neither a standard cylinder nor a standard M\"obius band.
Then by~\cite[Theorem~3.7]{MaksymenkoPolulyakh:PGC:2015} $\stripSurf$ admits a reduced atlas $\qmap: \bigsqcup \limits_{\stInd \in \StInd} \strip_{\stInd} \to \stripSurf$, that is an atlas having no leaves of types \typeCycle\ and \typeReduce.
Let $D= \qmap\bigl(\bigsqcup \limits_{\stInd \in \StInd} \partial\strip_{\stInd}\bigr)$ and $\singLeaves$ be the family of all singular leaves of $\Partition$.
As $\partial\stripSurf=\varnothing$, we get from Corollary~\ref{cor:families_of_leaves} that $D = \partial\stripSurf \cup \singLeaves = \singLeaves$.

Hence we should prove that the image of $\delta$ intersects $D$.

Suppose $\delta([0,1])\cap D = \varnothing$.
Then $\delta([0,1]) \subset \qmap(\Int{\strip_{\stInd}})$ for some $\stInd\in\StInd$.
Consider the following composition of maps:
\[
\beta = \pi \circ \qmap^{-1} \circ \delta: [0,1] \xrightarrow{~\delta~}
\qmap(\Int{\strip_{\stInd}}) \xrightarrow{~\qmap^{-1}~}
\Int{\strip_{\stInd}} = \bR\times(u,v) \xrightarrow{~\pi~} (u,v),
\]
where $\pi$ is the projection onto the second coordinate.

Then $\beta:[0,1]\to(0,1)$ is a locally injective (that is strictly monotone) continuous map satisfying $\beta(0)=\beta(1)$ which is impossible.
Hence $\delta([0,1])$ intersects $D=\singLeaves$.
\end{proof}

Now we can prove Theorem~\ref{th:charact_stripedsurf_old_ext}.
One can assume that $\stripSurf$ is connected and $\partial\stripSurf\not=\varnothing$.

Let $\specLeaves$ be the family of all special leaves of $\Partition$.
Then, by Lemma~\ref{lm:props_of_regular_leaves}, $\specLeaves \subset \singLeaves$, whence $\specLeaves$ is locally finite as well.
Moreover, by the assumption each leaf in $\Int{\stripSurf}$ satisfies condition~\ref{enum:th:charact_stripedsurf_old:eq:cross_sect} of Theorem~\ref{th:cross_sections}.
Hence by Theorem~\ref{th:charact_stripedsurf_old} the foliated surface $(\Int{\stripSurf}, \Partition_{\Int{\stripSurf}})$ admits a striped atlas.

Suppose there exists a leaf $\leaf$ of $\Partition$ belonging to $\partial\stripSurf$ and having no cross sections.
Then $\leaf$ is singular.
We will find a sequence of singular leaves converging to $\leaf$.
This will give a contradiction with the assumption that $\singLeaves$ is locally finite.

Let $x\in\leaf$, $\phi:(-1,1)\times[0,1) \to \stripSurf$ be a foliated local chart at $x$ such that $\phi(0,0)=x$, and $\delta:[0,1) \to \stripSurf$ be defined by $\delta(t) = \phi(0,t)$.

Since $\leaf$ does not admit cross sections, it follows that for each $\eps>0$ the curve $\delta((0,\eps)) \subset \Int{\stripSurf}$ intersects some leaf of $\Partition$ more that once.
So, one can find $a_{\eps} < b_{\eps}\in (0,\eps)$ such that $\delta(a_{\eps})$ and $\delta(b_{\eps})$ belong to the same leaf.
As each leaf in $\Int{\stripSurf}$ admits a cross section, it follows that $\delta((0,\eps))$ is a local cross section.
Hence, $\delta:[a_{\eps}, b_{\eps}] \to \Int{\stripSurf}$ is also a local cross section.

First suppose $\Int{\stripSurf}$ is either the standard cylinder or a M\"obius band.
Then by \ref{enum:lm:cross_section_meets_a_singular_leaf:cylinder} of Lemma~\ref{lm:cross_section_meets_a_singular_leaf} $\Sat{\delta([a_{\eps}, b_{\eps}])} = \Int{\stripSurf}$, that is the composition
\[
p\circ \delta:[a_{\eps}, b_{\eps}] \xrightarrow{~\delta~} \Int{\stripSurf} \xrightarrow{~p~} \Int{\stripSurf}/\Partition_{\Int{\stripSurf}}= S^1
\]
is surjective.
But this is will contradict to continuity of $p\circ \delta$ when $\eps \to 0$.

Therefore $\Int{\stripSurf}$ is neither the standard cylinder nor a M\"obius band.
Hence by Lemma~\ref{lm:cross_section_meets_a_singular_leaf} there exists a point $c_{\eps}\in[a_{\eps}, b_{\eps}]$ belonging to a certain singular leaf $\leaf_{\eps}$.

This implies that arbitrary small neighbourhood of $x$ intersects infinitely many singular leaves, whence $\singLeaves$ is not locally finite which contradicts to the assumption.

\providecommand{\bysame}{\leavevmode\hbox to3em{\hrulefill}\thinspace}
\providecommand{\MR}{\relax\ifhmode\unskip\space\fi MR }
\providecommand{\MRhref}[2]{%
  \href{http://www.ams.org/mathscinet-getitem?mr=#1}{#2}
}
\providecommand{\href}[2]{#2}

\end{document}